\newcommand{\RR}{\mathbb{R}}
\newcommand{\ZZ}{\mathbb{Z}}
 \newcommand{\inte}{\operatorname{int}}
\newcommand{\vB}{\mathcal{B}}
\newcommand{\vC}{\mathcal{C}}
\newcommand{\vF}{\mathcal{F}}
\newcommand{\SL}{\operatorname{SL}}
\newcommand{\Aut}{\operatorname{Aut}}
\newcommand{\Ends}{\operatorname{Ends}}
\newcommand{\Homeo}{\operatorname{Homeo}}
\newcommand{\PMap}{\operatorname{PMap}}
\newcommand{\Map}{\operatorname{Map}}
\newcommand{\Out}{\operatorname{Out}}
\newcommand{\sm}{\setminus}
\newcommand{\ol}[1]{\overline{#1}}
\newcommand{\wt}[1]{\widetilde{#1}}
\newcommand{\id}{\text{id}}
\newcommand{\eand}{\quad \text{ and } \quad}
\definecolor{lightgrey}{gray}{.85}
\theoremstyle{definition}
\newtheorem*{defn}{Definition}
\newtheorem*{rmk}{Remark}
\newtheorem*{theorem}{Theorem}
\theoremstyle{plain}
\newtheorem{thm}{Theorem}[section]
\newtheorem{main}{Theorem}
\newtheorem{lem}[thm]{Lemma}
\newtheorem{cor}[thm]{Corollary}
\newtheorem{prop}[thm]{Proposition}
\newtheorem{qu}[thm]{Question}
\theoremstyle{definition}
\begin{document}
\title{Big mapping class groups and the co-Hopfian property}

\author{Javier Aramayona}
\address{Instituto de Ciencias Matem\'aticas, ICMAT (CSIC-UAM-UC3M-UCM)}
\email{javier.aramayona@icmat.es}
\thanks{J.A. was supported by grant PGC2018-101179-B-I00. He acknowledges financial support from the Spanish Ministry of Science and Innovation, through the “Severo Ochoa Programme for Centres of Excellence in R\&D” (CEX2019-000904-S). C.L. was partially supported by NSF grants DMS-1811518 and DMS-2106419.
A.M was supported by the Luxembourg National Research Fund OPEN grant O19/13865598.}

\author{Christopher J. Leininger}
\address{Rice University}
\email{c.j.leininger95@gmail.com}

\author{Alan McLeay}
\address{University of Luxembourg}
\email{mcleay.math@gmail.com}

\maketitle

\begin{abstract}
We study injective homomorphisms between big mapping class groups of infinite-type surfaces. First, we construct (uncountably many) examples of surfaces without boundary whose (pure) mapping class groups are not co-Hopfian; these are the first such examples of injective endomorphisms of mapping class groups that fail to be surjective. 

We then prove that, subject to some topological conditions on the domain surface, any continuous injective homomorphism between (arbitrary) big mapping class groups that sends Dehn twists to Dehn twists is induced by a subsurface embedding. 

Finally, we explore the extent to which, in stark contrast to the finite-type case, superinjective  maps between curve graphs impose no topological restrictions on the underlying surfaces.
\end{abstract}


\section{Introduction and main results} Throughout this article, all surfaces will be assumed to be connected, orientable and second-countable, unless otherwise specified. A surface $S$ has {\em finite type} if its fundamental group is finitely generated, and has {\em infinite type} otherwise.

 The mapping class group $\Map(S)$ is the group of orientation-preserving homeomorphisms of $S$, up to homotopy; if $\partial S\ne \emptyset$, then we require that all homeomorphisms and homotopies fix $\partial S$ pointwise. In the case when $S$ has finite type, $\Map(S)$ is well-known to be finitely presented. If, on the contrary, $S$ has infinite type, then $\Map(S)$ becomes an uncountable, totally disconnected, non-locally compact topological group with respect to the quotient topology stemming from the  compact-open topology on the homeomorphism group of $S$. We refer the reader to Section \ref{sec:prelim} for expanded definitions, and to the recent survey \cite{AV} for a detailed treatment of mapping class groups of infinite-type surfaces, now commonly known as {\em big mapping class groups}. 
 
\subsection{Algebraic rigidity for mapping class groups} A seminal result of Ivanov \cite{Ivanov} states that if $S$ is a (sufficiently complicated) finite type surface $S$, then every automorphism of $\Map(S)$ is a conjugation by an element of the {\em extended mapping class group} $\Map^\pm(S)$, namely the group of all homeomorphisms of $S$ up to isotopy. In other words, every isomorphism $\Map(S) \to \Map(S)$ is induced by a homeomorphism $S\to S$. The analog for infinite-type surfaces was recently obtained by Bavard-Dowdall-Rafi \cite{BDR}; see Theorem~\ref{thm:BDR} below.

The proofs in the finite and infinite type settings proceed along similar lines, based on the  work of Ivanov \cite{Ivanov}. First, one proves an algebraic characterization of Dehn twists and uses it to deduce that any isomorphism $\phi: \Map(S) \to \Map(S)$ must send (powers of) Dehn twists to (powers of) Dehn twists. In particular, $\phi$ induces a simplicial automorphism $\phi_*: \vC(S) \to \vC(S)$ of the {\em curve complex}. At this point, the argument boils down to showing that every simplicial automorphism of $\vC(S)$ is induced by a homeomorphism of $S$ \cite{Ivanov}. 
In fact, this approach applies, with a finite-list of exceptional surfaces, to any isomorphism between finite-index subgroups of $\Map(S)$ or the {\em pure mapping class group} $\PMap(S)$, the subgroup of $\Map(S)$ whose elements fix every end of $S$ \cite{Ivanov,BM,BelM,Irmak,Irmak2,Irmak3,Sha}

\subsection{The co-Hopfian property} The combination of results of Ivanov-McCarthy \cite{IM} and Bell-Margalit \cite{BelM} imply that, with the possible exception of the twice-punctured torus, if $S$ has finite type then every injective homomorphism $\Map(S) \to \Map(S)$  is induced by a homeomorphism of $S$. In particular, the mapping class group of a (sufficiently complicated) finite-type  surface is co-Hopfian: every injective endomorphism is surjective.

For infinite-type surfaces, Question 4.5 of the AIM Problem List on Surfaces of Infinite Type \cite{AIM} (see also \cite[Question 5.2]{AV}) asks: 

\begin{qu}
Are big mapping class groups co-Hopfian? 
\label{qu:main}
\end{qu}

%

\begin{rmk}
The answer to the above question is immediately ``no'' for surfaces with non-empty boundary. For instance, if a surface $S$ has non-empty boundary  and its space of ends is homeomorphic to  a Cantor set, then there exists a proper $\pi_1$-injective continuous map $S \to S$ that is not surjective. In turn, this map induces an injective homomorphism $\Map(S) \to \Map(S)$ that is not surjective; compare with Theorem \ref{thm:doubling} below. 
\end{rmk}

Our first result states that the answer to Question \ref{qu:main} is also negative for surfaces without boundary. Recall that the {\em Loch Ness Monster} is the unique (up to homeomorphism) connected orientable surface of infinite genus and exactly one end. We will prove:

\begin{main}
Let $S$ be either the once-punctured Loch Ness Monster or a torus minus the union of a Cantor set and an isolated point. Then there exists a homomorphism $\phi: \Map(S) \to \Map(S)$ such that: 
\begin{enumerate}
\item \label{item:main noncohopf} $\phi$ is continuous and injective, but not surjective,
\item \label{item:main nonfinitetype} There is a Dehn twist $t$ such that $\phi(t)$ is not supported on any finite-type subsurface of $S$. In the particular case when $S$ is the Loch Ness Monster,  no power of $\phi(t)$ is supported on a finite-type subsurface of $S$. 
\item \label{item:main nongeometric} There exists a partial pseudo-Anosov  $f\in\Map(S) $ such that $\phi(f)$ is a multitwist. 
\end{enumerate}
\label{thm:noncoHopf}
\end{main}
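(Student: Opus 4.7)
My strategy is to construct $\phi$ explicitly via an ``inflation'' procedure that encodes a single Dehn twist as an infinite multitwist, exploiting a self-similarity of $S$ at its unique non-planar end. I describe the plan for the once-punctured Loch Ness Monster; the torus-minus-Cantor-and-point case is handled by a parallel construction, replacing the handle shift by a suitable shift among the Cantor-set ends which absorbs the isolated planar end.

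For the geometric set-up, I would choose a handle-shift-type homeomorphism $\sigma \in \Homeo(S)$ of infinite order translating along the Loch Ness end and fixing a neighborhood of the puncture, together with a fundamental domain $F \subset S$ for the $\sigma$-action --- that is, a subsurface such that $\{\sigma^n(F)\}_{n \in \ZZ}$ have pairwise disjoint interiors and exhaust $S$ away from a neighborhood of the puncture. Fix also a simple closed curve $\gamma \subset F$. I would then define $\phi$ first on a generating family: for a mapping class $g$ with $\supp(g) \subseteq F$, set
\[
\phi(g) \;=\; \prod_{n \in \ZZ} \sigma^n g \sigma^{-n}.
\]
Because the factors have pairwise disjoint supports (each contained in $\sigma^n(F)$), they commute, so the infinite product is a well-defined element of $\Map(S)$. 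On $\sigma$ itself, set $\phi(\sigma) = \sigma$. These data extend to the subgroup $H \subseteq \Map(S)$ generated by $\sigma$ and $\{g : \supp(g) \subseteq F\}$ as a homomorphism, with consistency following from $\phi(\sigma g \sigma^{-1}) = \sigma \phi(g) \sigma^{-1}$. The key technical step is then extending $\phi|_H$ to all of $\Map(S)$ by proving $H$ is dense and using continuity of $\phi|_H$ (which holds because the infinite product converges in the compact-open topology).

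Verification of the required properties: continuity on $\Map(S)$ is automatic, since for any compact $K \subset S$ only finitely many $\sigma^n(F)$ meet $K$. Injectivity follows by recovering the $n = 0$ factor of $\phi(g)$ from its restriction to $F$, then iterating across $\sigma$-translates. Non-surjectivity is immediate, as $T_\gamma$ itself is not in the image. Property (2) holds for $t = T_\gamma$: the image $\phi(t) = \prod_n T_{\sigma^n(\gamma)}$ is a multitwist with infinite support, and the same holds for every non-zero power. For property (3), I would construct a partial pseudo-Anosov $f$ whose support is arranged transversely to the fundamental-domain decomposition, chosen so that in the infinite-product expansion of $\phi(f)$ the individual pseudo-Anosov pieces cancel one another, leaving only a multitwist.

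The main obstacle I anticipate is the simultaneous verification of injectivity, continuity, and the consistent extension of $\phi$ from $H$ to all of $\Map(S)$, which relies on a detailed analysis of the generating and topological structure of big mapping class groups. Constructing the specific partial pA witnessing property (3) is likewise delicate: it requires an element whose interaction with the shift $\sigma$ is arranged precisely so that its infinite-product image under $\phi$ collapses to a multitwist rather than accumulating pseudo-Anosov dynamics.
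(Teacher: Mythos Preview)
Your construction has a fatal flaw at the very first step: the map $\phi$ cannot be both a homomorphism with $\phi(\sigma)=\sigma$ and injective. Indeed, for any $g$ supported in $F$ the infinite product $\phi(g)=\prod_{n\in\ZZ}\sigma^n g\sigma^{-n}$ is manifestly $\sigma$--invariant: conjugating by $\sigma$ just reindexes the product, so $\sigma\,\phi(g)\,\sigma^{-1}=\phi(g)$. If $\phi$ is a homomorphism with $\phi(\sigma)=\sigma$, this forces
\[
\phi(\sigma g\sigma^{-1})=\phi(\sigma)\phi(g)\phi(\sigma)^{-1}=\sigma\,\phi(g)\,\sigma^{-1}=\phi(g),
\]
while $\sigma g\sigma^{-1}\neq g$ (their supports lie in the disjoint pieces $\sigma(F)$ and $F$). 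Hence the nontrivial commutator $[\sigma,g]$ lies in the kernel, and $\phi$ is not injective. Worse, the same computation shows $\phi$ is not even well defined as a homomorphism on $H$: the elements $g$ and $\sigma h\sigma^{-1}$ commute in $H$ for all $g,h$ supported in $F$ (disjoint supports), yet their putative images $\phi(g)$ and $\phi(h)$ commute only when $g$ and $h$ already do. So the ``consistency'' you claim fails. There is no repair that keeps the inflation idea: any assignment making $\phi(g)$ shift-periodic will collide with $\phi(\sigma)=\sigma$ in exactly this way.

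The paper's approach is entirely different and avoids this obstruction. It exploits the Birman exact sequence for a surface with a distinguished puncture together with a \emph{geometrically characteristic} covering $p\colon\wt S\to S$ (the mod--$m$ homology cover of the torus, respectively the mod--$2$ homology cover of the Loch Ness Monster). Lifting homeomorphisms through $p$ and normalizing at a chosen preimage of the puncture yields a continuous injection $\phi\colon\Map(S^z)\to\Map(\wt S^{\wt z})$; injectivity is proved via $\Aut(\pi_1)$, using that a surface group automorphism which is the identity on a nontrivial normal subgroup is globally the identity. The key identity $\phi\circ p_*=\id$ on $\pi_1(\wt S,\wt z)$ then gives property~(3) for free: a simple loop upstairs with non-simple image downstairs produces, via Kra's theorem, a partial pseudo-Anosov whose $\phi$--image is the multitwist of the simple loop. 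Property~(2) comes from tracking how lifts of a twist permute preimages of ends, and non-surjectivity then follows from Bavard--Dowdall--Rafi. None of these steps has an analogue in your inflation picture; in particular, your suggested mechanism for~(3) --- arranging pseudo-Anosov pieces to ``cancel'' in an infinite product --- has no evident realization.
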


\begin{rmk}
Part (2) of the above theorem yields a negative answer to Problem 4.75 of the AIM Problem List on Surfaces of Infinite Type \cite{AIM}. 
\end{rmk}

The construction behind Theorem \ref{thm:noncoHopf} is  inspired by the construction of the {\em non-geometric} injective homomorphism of the first two authors with Souto \cite[Theorem 2]{ALS}, building on a well-known construction of homomorphisms from covers (see e.g. \cite{BirHil,IM,Win}). Namely, we construct a covering map $S' \to S$ which induces an injective homomorphism $\Map(S) \to \Map(S')$, in such a way that the surface $S''$ obtained by filling in all but one of the punctures of $S'$ is homeomorphic to $S$, and yet the homomorphism $\Map(S) \to \Map(S'')$ remains injective. Once this has been done, the resulting homomorphism is easily seen to not be surjective in light of Bavard-Dowdall-Rafi's result \cite{BDR} mentioned above, since $\phi$ sends some finitely supported elements to elements which are not finitely supported.

Motivated by this construction, we also observe the following.

\begin{thm}
Let $S$ denote the closed surface of genus $g\ge 1$ minus the union of a Cantor set and an isolated point.  Then there exists a continuous injective homomorphism $\Map(S) \to \Map(\RR^2 \smallsetminus C)$, where $C$ denotes a Cantor set. 
\label{thm:decreasing}
\end{thm}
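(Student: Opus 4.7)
The plan is to adapt the cover-and-fill strategy outlined for Theorem~\ref{thm:noncoHopf}, now exploiting the fact that the universal cover of the closed genus-$g$ surface $S_g$ is topologically $\RR^2$ for every $g\geq 1$. Write $S = S_g \smallsetminus (C\cup\{p\})$ and let $\pi\colon\wt{S_g}\to S_g$ denote the universal cover. Then $\wt S := \pi^{-1}(S) \cong \RR^2\smallsetminus\pi^{-1}(C\cup\{p\})$ is a regular cover of $S$ of genus zero with deck group $\pi_1(S_g)$. Fix a lift $p_0\in\pi^{-1}(p)$ and let $\wt Q$ be the surface obtained from $\wt S$ by filling in every other preimage of $p$. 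Since $\pi^{-1}(C)\subset\RR^2$ is totally disconnected, perfect, and unbounded, the set $\pi^{-1}(C)\cup\{\infty\}\subset S^2$ is a Cantor set, while $p_0$ contributes a single isolated puncture to $\wt Q$. Hence $\wt Q$ has genus zero with end space a Cantor set together with one isolated end, so $\wt Q\cong \RR^2\smallsetminus C$.

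Next I would define a homomorphism $\Phi\colon\Map(S)\to\Map(\wt Q)$ by canonical lifting. Because $p$ is the unique isolated end of $S$, every $f\in\Map(S)$ has a representative $\phi$ that is the identity on a neighborhood of the end $p$. Then $\phi$ admits a unique lift $\tilde\phi\colon\wt S\to\wt S$ that is the identity on the corresponding neighborhood of $p_0$, and this lift extends by continuity to a homeomorphism of $\wt Q$ fixing $p_0$ (and permuting the filled-in points, which are now interior points). Setting $\Phi(f):=[\tilde\phi]$ gives a group homomorphism, and continuity follows from the standard interplay between the compact-open topology and the canonical-lift construction.

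The main obstacle is injectivity. I would factor $\Phi$ as $\Map(S)\hookrightarrow\Map(\wt S)\to\Map(\wt Q)$, where the first arrow is canonical lifting, injective because an isotopy of $\tilde\phi$ descends under $\pi$ to an isotopy of $\phi$, and the second arrow caps off the filled-in punctures. The kernel of the capping map is generated by point-pushing maps about the filled points $p_j\in\pi^{-1}(p)\smallsetminus\{p_0\}$. The key point is that the canonical lift of any element of $\Map(S)$ must normalize the deck group $\pi_1(S_g)\subset\Homeo(\wt S)$, whereas $\pi_1(S_g)$ acts simply transitively on $\pi^{-1}(p)$ and therefore cannot preserve the subset $\pi^{-1}(p)\smallsetminus\{p_0\}$; consequently the only element of this kernel that is compatible with the deck symmetry is the identity. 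For instance, the canonical lift of a Dehn twist $T_\delta$ about a loop $\delta\subset S$ enclosing $p$ is the $\pi_1(S_g)$-equivariant infinite product $\prod_{j}T_{\tilde\delta_j}$, whose image in $\Map(\wt Q)$ under capping is the nontrivial twist $T_{\tilde\delta_0}$ about the unique remaining puncture $p_0$.
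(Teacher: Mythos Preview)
Your overall strategy---use the universal cover of the closed genus-$g$ surface and keep a single lift of the isolated end---is exactly the one the paper employs.  The difference is in how you organize the lifting, and your organization introduces genuine gaps.

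The paper removes only the Cantor set first, setting $\Sigma:=S_g\smallsetminus C$, takes the (restricted) universal cover $p\colon\wt\Sigma\to\Sigma$, and only then punctures once at $\wt z\in p^{-1}(z)$.  There is thus nothing to cap: the target is already $\wt\Sigma^{\wt z}\cong\RR^2\smallsetminus C$.  Injectivity comes straight from Proposition~\ref{P:lifting general}, whose proof runs through the injections $\Map(\Sigma^z)\hookrightarrow\Aut(\pi_1(\Sigma,z))$ and $\Map(\wt\Sigma^{\wt z})\hookrightarrow\Aut(\pi_1(\wt\Sigma,\wt z))$ and a short centralizer argument in surface groups.

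By removing the isolated point \emph{before} covering, you create infinitely many extra punctures upstairs and must cap them afterwards; your injectivity argument for the composite does not hold up.  First, the assertion that $\Map(S)\to\Map(\wt S)$ is injective ``because an isotopy of $\tilde\phi$ descends under $\pi$'' is incorrect as stated: an arbitrary isotopy of $\tilde\phi$ to the identity in $\wt S$ has no reason to be deck-equivariant, hence no reason to descend.  Second, the deck-symmetry argument for the capping step is not a proof.  That $\pi_1(S_g)$ fails to preserve $\pi^{-1}(p)\smallsetminus\{p_0\}$ tells you nothing about whether an \emph{individual} lift $\tilde\phi$ normalizing the deck group and lying in the capping kernel must be trivial; your concluding example only exhibits one element with nontrivial image.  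Both problems evaporate if you puncture after covering and invoke Proposition~\ref{P:lifting general} directly.
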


As far as we know, these are the first examples of injective homomorphisms between mapping class groups where the genus decreases from domain to codomain. 

\subsection{Not co-Hopfian pure mapping class groups} If we restrict Question \ref{qu:main} to pure mapping class groups, we will see that there is a much richer palette of examples of injective, but not surjective, homomorphisms.
We say that a surface $S$ is \emph{self-doubling} if there exists a multicurve $\vB$ such that;
\begin{enumerate}
\item $S \sm \vB$ has two connected components $L$ and $R$,
\item  $L$ and $R$ are both homeomorphic to $S$, and
\item there exists orientation-reversing homeomorphism $\iota:S\to S$ of order two such that $\iota(L) = R$ and $\iota(b) = b$ for each $b \in \vB$.
\end{enumerate}

\begin{main}
For  every self-doubling surface $S$ there exists a continuous injective homomorphism $\PMap(S) \to \PMap(S)$ that is not surjective. 
\label{thm:doubling} 
\end{main}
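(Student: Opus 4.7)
My plan is to build $\phi$ by ``doubling'' each mapping class through the involution $\iota$. Fix an orientation-preserving homeomorphism $h\colon S\to L$; then $\iota\circ h\colon S\to R$ is also a homeomorphism. Given $f\in\PMap(S)$ represented by a homeomorphism $\tilde f$, I would set $\phi(f)$ to equal $h\tilde fh^{-1}$ on $L$, to equal $\iota\, h\tilde fh^{-1}\iota^{-1}$ on $R$, and to equal the identity on $\vB$. Since $\iota$ is orientation-reversing and appears twice in the second formula, $\phi(f)$ is orientation-preserving; and because $f$ fixes each end of $S$ while $\iota$ swaps ends of $L$ with those of $R$ symmetrically, $\phi(f)$ fixes each end of $S$.

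The first task is to check that this construction yields a well-defined element of $\PMap(S)$. Under $h$, each curve $b\in\vB$ corresponds to an isolated planar end $e_b$ of $S$, and by local finiteness of $\vB$ these ends admit pairwise disjoint annular neighborhoods in $S$. Because the local mapping class group at an isolated planar end is trivial (by a Schoenflies-type argument combined with the triviality of the mapping class group of a half-open annulus that fixes the compact boundary), $f$ has a representative $\tilde f$ that is the identity on a neighborhood of every $e_b$ simultaneously. With this choice, $h\tilde fh^{-1}$ is the identity on a collar of every $b\in\vB$ in $\overline L$, so the doubled map extends continuously across $\vB$ by the identity. After this, the homomorphism property follows directly, continuity follows by verifying that pointwise stabilizers of finite curve systems pull back under $\phi$ to open subgroups of $\PMap(S)$, and injectivity follows by noting that any isotopy $\phi(f)\simeq\id$ can be arranged to fix $\vB$, whose restriction to $L$ exhibits $h\tilde fh^{-1}\simeq\id$ and hence $f\simeq\id$.

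Non-surjectivity is immediate from the built-in symmetry: by construction $\iota\phi(f)\iota^{-1}=\phi(f)$ in $\PMap(S)$ for every $f$, so the image of $\phi$ lies in the fixed subgroup of the order-two automorphism of $\PMap(S)$ induced by $\iota$-conjugation. Choose any non-separating curve $c\subset L$; then $\iota(c)\subset R$ is disjoint from $c$, and $\iota T_c\iota^{-1}=T_{\iota(c)}^{\pm 1}$ has support disjoint from that of $T_c$. Therefore $\iota T_c\iota^{-1}\ne T_c$, and $T_c$ is not in the image of $\phi$.

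I expect the main obstacle to be the well-definedness step when $\vB$ is infinite: one must simultaneously realize $f$ by a homeomorphism that is trivial on a locally finite collection of isolated planar ends of $S$ which may accumulate at non-planar ends of $S$, while preserving continuity in the compact-open topology. The remedy is to choose the annular neighborhoods of the $e_b$ to be pairwise disjoint with sizes shrinking to zero at any accumulation points, and to apply the local triviality of the planar-end mapping class group in a uniformly controlled way across the $e_b$'s.
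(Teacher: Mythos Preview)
Your construction is the same homomorphism the paper builds: both ``double'' a mapping class through the involution~$\iota$. The paper packages this by first passing to the bounded surface $\bar L$ (that is, $L$ together with $\vB$), using the capping exact sequence
\[
1 \to \textstyle\prod_i \langle T_{b_i}\rangle \to \PMap(\bar L) \to \PMap(S) \to 1,
\]
and defining $d\colon \PMap(\bar L)\to \PMap(S)$ by $d(g)=\Psi_1(g)\Psi_2(g)$ where $\Psi_1,\Psi_2$ are the two ``extend by the identity'' inclusions, related by conjugation by~$\iota$. The crucial computation is that $\ker d$ equals $\prod_i\langle T_{b_i}\rangle$: since $\iota$ is orientation-reversing and fixes each $b_i$, one has $\Psi_2(T_{b_i})=\Psi_1(T_{b_i})^{-1}$, so the doubled boundary twist is trivial. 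The exact sequence then forces $d$ to descend to an injection $\PMap(S)\hookrightarrow\PMap(S)$.

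Your hands-on version is missing exactly this step. You verify that \emph{some} normalized representative $\tilde f$ (trivial near every $e_b$) exists, which is equivalent to surjectivity of $\PMap(\bar L)\to\PMap(S)$; but you never check that the mapping class of the doubled map is independent of \emph{which} such $\tilde f$ you chose. Two normalized representatives of the same $f$ differ, as elements of $\PMap(\bar L)$, by a product of boundary twists $\prod T_{b_i}^{n_i}$, and without the orientation-reversal observation above there is no reason the doubled maps should be isotopic. This is not a cosmetic omission: it is the one place where the hypothesis that $\iota$ is orientation-\emph{reversing} actually enters, and without it the construction collapses. (Your injectivity sketch has a related issue: the claim that an isotopy $\phi(f)\simeq\id$ ``can be arranged to fix $\vB$'' is exactly what the boundary-twist ambiguity obstructs, and again needs the cancellation argument.)

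One minor point on non-surjectivity: you pick a non-separating curve $c\subset L$, but self-doubling surfaces can have genus zero (the paper's first example is a sphere minus a Cantor set and two points). Take instead any essential curve $c\subset L$ not isotopic into $\vB$; then $c$ and $\iota(c)$ are disjoint and non-isotopic in $S$, and the same $\iota$-centralizer argument goes through.
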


%
%

\noindent{\bf Examples.} We now list some concrete examples of surfaces to which Theorem \ref{thm:doubling} applies. 
\begin{enumerate}
\item A first  example of a self-doubling surface $S$ is the sphere minus the union of a Cantor set and the north and south pole: any essential curve $b$ that separates the poles splits $S$ into two subsurfaces $L$ and $R$ with interiors homeomorphic to $S$.

\item Alternatively, suppose $S$ has infinite genus such that every planar end is isolated, and for every non-planar end there is a sequence of planar ends converging to it. (We remark that there are uncountably many surfaces with this property.) Then $S$ is also self-doubling; see Theorem \ref{thm:double-hard}. 
\end{enumerate}

We give an equivalent definition of self-doubling surfaces, along with more examples, in Section \ref{sec:doubling}.
%
%
%

\subsection{Twist-preserving homomorphisms} In light of the discussion above, we now focus on homomorphisms that send Dehn twists to Dehn twists; we call these {\em twist-preserving homomorphisms}. In this section we will allow surfaces to have a non-empty boundary, though we assume it is compact (and is thus a finite union of circles). We will prove the following result; recall that a map between topological spaces is {\em proper} if the preimage of every compact set is compact:

\begin{main}
Let $S$ and $S'$ be  surfaces of infinite type, where $S$ has positive genus. Assume further that either the boundary of $S$ is empty, or else $S$ has at most one end accummulated by genus.  If $\phi: \PMap(S) \to \PMap(S')$  is a continuous injective twist-preserving homomorphism,  then there is a proper $\pi_1$-injective embedding $ S \to S'$ that induces $\phi$. 
\label{thm:injections}
\end{main}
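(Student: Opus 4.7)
The plan is to follow the Ivanov template, adapted from finite to infinite type via an exhaustion argument. Since $\phi$ is twist-preserving, for every essential simple closed curve $c\subset S$ we have $\phi(T_c)=T_{\phi_*(c)}$ for some isotopy class $\phi_*(c)$ in $S'$; injectivity of $\phi$ and the fact that different curves give different Dehn twists make $\phi_*$ a well-defined injection on isotopy classes. Commuting Dehn twists remain commuting under $\phi$, and the standard dictionary between commutation relations of twists and disjointness of curves (valid because $S$ has positive genus, hence is sufficiently complex) yields a simplicial, disjointness-preserving and disjointness-reflecting map $\phi_*\colon \vC(S)\to\vC(S')$.

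First, exhaust $S$ by a nested sequence $\Sigma_1\subset\Sigma_2\subset\cdots$ of compact subsurfaces, each of positive genus and with no pair of pants components in the complement. The hypothesis that $\partial S=\emptyset$, or that $S$ has at most one end accumulated by genus, is exactly what is needed so that a positive-genus exhaustion of $S$ exists and can be chosen with all $\Sigma_n$ sufficiently complex. The inclusion $\Sigma_n\hookrightarrow S$ induces an injective homomorphism $\PMod(\Sigma_n)\hookrightarrow \PMap(S)$, and postcomposing with $\phi$ yields an injective twist-preserving homomorphism $\phi_n\colon\PMod(\Sigma_n)\to\PMap(S')$ whose image lies in the subgroup generated by finitely many Dehn twists on the curves $\phi_*(c)$ for $c\subset\Sigma_n$; in particular it is supported on a finite-type subsurface of $S'$.

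Now apply the finite-type classification of twist-preserving injective homomorphisms (in the spirit of Ivanov--McCarthy, Bell--Margalit, Irmak, and Shackleton) to each $\phi_n$: it is induced by a $\pi_1$-injective subsurface embedding $\iota_n\colon\Sigma_n\to S'$, unique up to isotopy of $S'$. The restriction $\iota_{n+1}|_{\Sigma_n}$ induces the same homomorphism as $\iota_n$, so these two embeddings are isotopic; by isotopy extension one can inductively adjust the $\iota_n$ so that $\iota_{n+1}$ genuinely extends $\iota_n$, and the direct limit defines a continuous $\pi_1$-injective map $\iota\colon S\to S'$. Properness is where continuity of $\phi$ is used crucially: if a compact set $K\subset S'$ met $\iota(\Sigma_n\smallsetminus\Sigma_{n-1})$ for infinitely many $n$, one could choose curves $c_n\subset\Sigma_n\smallsetminus\Sigma_{n-1}$ with $\iota(c_n)\cap K\ne\emptyset$, and the Dehn twists $T_{c_n}$ would converge to the identity in $\PMap(S)$ while their images $T_{\iota(c_n)}=\phi(T_{c_n})$ would not converge to the identity in $\PMap(S')$.

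Finally, $\iota$ induces $\phi$ on the subgroup of compactly supported pure mapping classes by construction, and this subgroup is dense in $\PMap(S)$ (by Patel--Vlamis and the structure theory of pure mapping class groups), so continuity of both $\phi$ and of the homomorphism induced by $\iota$ forces them to agree on all of $\PMap(S)$. The main obstacle I anticipate is the second step: genuinely realizing the compatibility of the subsurface embeddings $\iota_n$, not just up to isotopy, while keeping the ambient adjustments compactly supported so they do not disturb the behaviour near the ends of $S'$. A secondary subtlety is ensuring that the finite-type classification applies to $\phi_n$, which has infinite-type codomain; here one must verify that the image lies in (the closure of) the mapping class group of a finite-type subsurface of $S'$, which the twist-preserving hypothesis delivers but deserves careful bookkeeping.
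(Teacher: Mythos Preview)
Your overall strategy matches the paper's: define $\phi_*$ on curves, exhaust $S$ by positive-genus finite-type pieces, realize each restriction by an embedding via a finite-type rigidity theorem, assemble into $\iota\colon S\to S'$, prove properness by continuity, and extend from compactly supported classes by density. Two points deserve correction.

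First, you misplace the role of the hypothesis ``$\partial S=\emptyset$, or $S$ has at most one end accumulated by genus''. It has nothing to do with the existence of a positive-genus exhaustion; any infinite-type surface of positive genus admits one. The hypothesis is used only at the very end, and your density argument fails precisely in one of the cases it is meant to cover. Patel--Vlamis show that the compactly supported subgroup is dense in $\PMap(S)$ \emph{if and only if} $S$ has at most one end accumulated by genus. When $\partial S=\emptyset$ but $S$ has several ends accumulated by genus (e.g.\ Jacob's ladder), Dehn twists do not topologically generate, and your final paragraph breaks down. The paper handles this case separately: since $\partial S=\emptyset$, the proper $\pi_1$-injective embedding $\iota$ is forced to be a homeomorphism, and then the Alexander method for infinite-type surfaces shows that $\iota_\sharp$ and $\phi$, which agree on every Dehn twist, agree everywhere.

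Second, and more minor: the finite-type ingredient you invoke (Ivanov--McCarthy, Bell--Margalit, Irmak, Shackleton) concerns self-maps of a single mapping class group or curve complex. The paper instead applies Aramayona--Souto on homomorphisms between mapping class groups of possibly different finite-type surfaces, and this requires knowing that the target subsurface has the same genus as $\Sigma_n$. The paper secures this by tracking a \emph{spanning chain} for $\Sigma_n$: because $\phi_*$ preserves intersection number one (via the braid relation), the image chain fills a subsurface of $S'$ of the same genus. Your sketch elides this bookkeeping; you should say explicitly how you control the genus of the image subsurface before citing a rigidity theorem.
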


\begin{rmk} Theorem \ref{thm:injections} no longer holds if $\partial S \ne \emptyset$ and $S$ has more than one end accumulated by genus; see the final remark of Section \ref{sec:twist}. Also, in the same remark we will see that the result we will prove in fact applies to a larger class of homomorphisms than injective ones. 
\end{rmk}
%
%

Continuing with our discussion, observe that if $\partial S= \emptyset$, any proper $\pi_1$-injective embedding $S \to S'$ is homotopic to a homeomorphism. Hence we obtain: 

\begin{cor}
Let $S$ and $S'$ be  surfaces of infinite type, where $S$ has positive genus and no boundary. If $\phi: \PMap(S) \to \PMap(S')$ is a continuous injective twist-preserving homomorphism, then it is induced by a homeomorphism $S \to S'$; in particular,  it is surjective. 
\end{cor}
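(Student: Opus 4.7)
The plan is to apply Theorem~\ref{thm:injections} to produce a proper $\pi_1$-injective embedding $\iota \colon S \to S'$ inducing $\phi$, and then use the hypothesis $\partial S = \emptyset$ to upgrade $\iota$ to a homeomorphism. Since $\partial S = \emptyset$, the first alternative in the hypotheses of Theorem~\ref{thm:injections} is satisfied, so such an $\iota$ is guaranteed to exist. Once $\iota$ is shown to be a homeomorphism, $\phi$ is simply the induced isomorphism, which is in particular surjective.

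The remaining content is the purely topological assertion that any proper embedding $\iota \colon S \to S'$ of a boundaryless surface $S$ into a connected surface $S'$ must be a homeomorphism. First, since $\partial S = \emptyset$ and $\iota$ is an embedding between $2$-manifolds, invariance of domain shows that each $p \in S$ has a neighborhood mapping homeomorphically onto an open subset of $S'$ homeomorphic to $\RR^2$; in particular $\iota(p) \notin \partial S'$, and $\iota(S)$ is open in $S'$. Second, $\iota(S)$ is closed in $S'$ because $\iota$ is proper: given $y \in \overline{\iota(S)}$ and a compact neighborhood $K \ni y$, the preimage $\iota^{-1}(K)$ is compact in $S$, so $\iota(S) \cap K = \iota(\iota^{-1}(K))$ is compact and therefore closed in $S'$, forcing $y \in \iota(S)$. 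Hence $\iota(S)$ is a nonempty clopen subset of the connected surface $S'$, so $\iota(S) = S'$. The embedding $\iota$ is therefore a continuous bijection which is a homeomorphism onto its image, and thus a homeomorphism $S \to S'$. (As a byproduct, this forces $\partial S' = \emptyset$.)

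All the real substance lies in Theorem~\ref{thm:injections}; the corollary amounts to the routine topological observation above, whose only ingredients are invariance of domain, the definition of a proper map, and the connectedness of $S'$. I would expect no obstacles beyond checking, if one wanted to be completely careful, that the target $S'$ being allowed to have compact boundary in Theorem~\ref{thm:injections} does not cause issues — but as noted, the argument automatically rules out $\partial S' \neq \emptyset$ by showing $\iota(S) \subseteq S' \smallsetminus \partial S'$.
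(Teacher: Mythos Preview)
Your proof is correct and follows the same approach as the paper: apply Theorem~\ref{thm:injections} to obtain a proper $\pi_1$-injective embedding, then observe that such an embedding from a boundaryless surface into a connected surface must be a homeomorphism. The paper merely asserts this topological fact in one line just before stating the corollary, whereas you spell out the clopen argument via invariance of domain and properness; your version is a faithful expansion of exactly what the paper has in mind.
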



To prove Theorem \ref{thm:injections}, one first observes that $\phi$ induces a simplicial map $\vC(S) \to \vC(S')$ that preserves  intersection number one. Once this has been shown, the result follows considering exhaustions and a result of Souto and the first named author \cite{AS}, plus  a continuity argument.
We conjecture that, although continuity is needed in our proof, it is in fact not necessary for Theorem \ref{thm:injections} to hold. 
In this direction, a result of Mann \cite{Mann} states that the mapping class group of certain surfaces have the {\em automatic continuity} property. Specifically, this automatic continuity holds when the surface is a closed surface punctured along the union of a Cantor set and a finite set; for these surfaces we obtain: 

\begin{cor} \label{C:injections+Mann}
Let $S$ and $S'$ be  surfaces of infinite type, where $S$ is obtained from a closed surface of positive genus by removing the union of a Cantor set and a finite set. If $\phi: \PMap(S) \to \PMap(S')$  is an injective twist-preserving homomorphism,  then there is a  homeomorphism $S \to S'$ that induces $\phi$. 
\end{cor}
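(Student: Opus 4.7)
The plan is to deduce Corollary \ref{C:injections+Mann} from Theorem \ref{thm:injections} by using Mann's automatic continuity theorem \cite{Mann} to upgrade the abstract homomorphism $\phi$ to a continuous one.

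First, I would invoke the main result of \cite{Mann}. By hypothesis, $S$ is a closed surface of positive genus punctured along the union of a Cantor set and a finite set, which is precisely the class of surfaces to which Mann's automatic continuity theorem applies. This says that every abstract group homomorphism from $\PMap(S)$ into a Polish (equivalently, second-countable, completely metrizable) topological group is continuous. Since $S'$ is second-countable, the group $\PMap(S')$, equipped with the quotient of the compact-open topology, is a Polish group. Consequently, $\phi: \PMap(S) \to \PMap(S')$ is automatically continuous.

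Having gained continuity, I would apply Theorem \ref{thm:injections} directly: the surface $S$ has positive genus, and its boundary is empty (since removing a set of points from a closed surface leaves the boundary empty), so the hypotheses of the theorem are fulfilled. The theorem then produces a proper $\pi_1$-injective embedding $f: S \to S'$ that induces $\phi$. Finally, since $\partial S = \emptyset$, any proper $\pi_1$-injective embedding between such surfaces is homotopic to a homeomorphism $S \to S'$; this is exactly the reduction made in the unlabeled corollary immediately following Theorem \ref{thm:injections}. Thus $\phi$ is induced by a homeomorphism $S \to S'$, completing the argument.

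The only genuine technical point is confirming that the target $\PMap(S')$ satisfies the topological hypothesis required by Mann's theorem, i.e.\ that it is a Polish group. This is standard for pure mapping class groups of second-countable surfaces carrying the quotient of the compact-open topology, and should not constitute a real obstacle; accordingly, all the work in this corollary is really done upstream, in \cite{Mann} and in Theorem \ref{thm:injections}.
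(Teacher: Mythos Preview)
Your proposal is correct and matches the paper's own approach: the paper does not give a separate detailed proof of this corollary, but explicitly presents it as the combination of Mann's automatic continuity theorem \cite{Mann} with Theorem~\ref{thm:injections} (together with the observation that $\partial S = \emptyset$ forces the resulting proper $\pi_1$-injective embedding to be a homeomorphism). Your write-up simply spells out those two steps, so there is nothing to add.
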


\subsection{Injections between curve graphs}

In the finite-type setting, an important step for establishing the co-Hopfian property for finite-index subgroups of mapping class groups is that {\em superinjective} self-maps of the curve graph are induced by homeomorphisms; see \cite{Irmak}, for instance. We recall that  a simplicial map between curve graphs is {\em superinjective} if it maps pairs of curves with non-zero intersection number to pairs of curves with the same property. 

 While it is known that every automorphism of the curve graph of an infinite-type surface is induced by a homeomorphism, \cite{HMV-iso,BDR}, it is easy to see that this is no longer true if one drops the requirement that the map be surjective; see \cite{HV,AV} for concrete examples. Our final result highlights the extent of this failure:

\begin{main}\label{thm:superinjective}
Let $S$ and $S'$ be infinite-type surfaces, where $S'$ has infinite genus. Then: 
\begin{enumerate}
\item There exists a superinjective simplicial map $\vC(S) \to \vC(S)$ that is not surjective. 
\item There exists a superinjective simplicial map $\vC(S) \to \vC(S')$.
\end{enumerate}
\end{main}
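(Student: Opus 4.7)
My plan is to build both superinjective maps via $\pi_1$-injective subsurface embeddings, with a combinatorial backup based on the end space when such a self-embedding is unavailable. The key observation is that any $\pi_1$-injective subsurface embedding $\iota \colon T \hookrightarrow S''$ (with no peripheral annular components) induces a simplicial superinjective map $\iota_* \colon \vC(T) \to \vC(S'')$, since both isotopy classes and geometric intersection number are preserved.

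For part (2), I would inductively construct a $\pi_1$-injective embedding $\iota \colon S \hookrightarrow S'$. Choose an exhaustion $S_1 \subset S_2 \subset \cdots$ of $S$ by connected, finite-type subsurfaces (with no peripheral annuli) such that $\bigcup_n S_n = S$, and build compatible $\pi_1$-injective embeddings $\iota_n \colon S_n \hookrightarrow S'$ with $\iota_{n+1}|_{S_n} = \iota_n$. At each step, the added topology of $S_{n+1} \sm S_n$ (finitely many pants, disks, and handles glued to $\iota_n(\partial S_n)$) is realized inside $S' \sm \iota_n(S_n)$, which inherits infinite genus from $S'$ and hence accommodates any finite-type extension. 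The limit $\iota = \bigcup_n \iota_n$ induces the desired superinjective map $\vC(S) \to \vC(S')$.

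For part (1), I would split into two cases. If $S$ has infinite genus, fix $c_* \in \vC(S)$ with a small annular neighborhood $N$; the complement $S \sm N$ still has infinite genus, so part (2) yields a superinjective $\phi \colon \vC(S) \to \vC(S \sm N)$, and composing with the natural inclusion $\vC(S \sm N) \hookrightarrow \vC(S)$ gives a superinjective self-map of $\vC(S)$ whose image misses $c_*$. If $S$ has finite genus, then $\Ends(S)$ is infinite and admits a homeomorphism $\sigma \colon \Ends(S) \to E_0$ onto a proper clopen subset $E_0 \subsetneq \Ends(S)$. When $S$ has genus zero, the vertices of $\vC(S)$ correspond to non-trivial two-sided clopen partitions of $\Ends(S)$, and the formula
\[
\phi(\{A,B\}) \;=\; \bigl\{\, \sigma(A),\; \sigma(B) \cup (\Ends(S) \sm E_0) \,\bigr\}
\]
defines a simplicial superinjective self-map of $\vC(S)$ which is not surjective: any partition with both sides meeting $\Ends(S) \sm E_0$ lies outside its image. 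For $S$ of finite positive genus, I would adapt by combining the end-space shift with a fixed splitting of the genus between the two sides of each curve.

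The main obstacle is the finite-positive-genus case of (1), where $\vC(S)$ is richer than the partition lattice of $\Ends(S)$---there are non-separating curves, and separating curves come with a choice of how to split the finite genus---so the shift must be extended over this additional data while preserving superinjectivity. A secondary obstacle is the inductive step in (2), which reduces to a standard gluing argument in the presence of infinite genus.
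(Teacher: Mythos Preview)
For part (2) your direct inductive embedding $S\hookrightarrow S'$ is a reasonable alternative to the paper's route, which factors everything through the Loch Ness Monster $M$: the paper builds $\vC(S)\to\vC(M)$ by successively replacing punctures with one-holed tori, puncturing along a principal exhaustion, and connecting the resulting pieces with planar surfaces to force a single end, and then embeds $M$ into any infinite-genus $S'$. Both rest on the same principle that $\pi_1$-injective subsurface inclusions induce superinjective maps on curve graphs. Your version is more economical, but you have not supplied the bookkeeping that makes the induction go through: at each stage you must ensure that every boundary circle of $\iota_n(S_n)$ that will later receive more topology bounds an infinite-genus component of $S'\setminus\iota_n(S_n)$, and that circles receiving a \emph{common} piece of $S_{n+1}\setminus S_n$ lie on the boundary of the same such component.

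Part (1) has two genuine gaps. First, your genus-zero formula
\[
\phi(\{A,B\}) \;=\; \bigl\{\sigma(A),\ \sigma(B)\cup(\Ends(S)\setminus E_0)\bigr\}
\]
is not well-defined: the pair $\{A,B\}$ is unordered, yet swapping $A$ and $B$ changes the output. This is reparable (fix $e_0\in\Ends(S)$ and declare $e_0\in B$), but you must then also check that both sides of the image partition have size at least two, which is the actual essentiality condition you elided. Second, and more seriously, the finite-positive-genus case is left as an acknowledged ``main obstacle'' with only a vague gesture toward a fix; non-separating curves carry no end-partition at all, and for separating curves the genus split interacts with the end data in a way an end-space shift alone cannot control. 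The paper bypasses all of this with a single device that works uniformly across all cases of (1): fix a hyperbolic metric on $S$ and puncture at one or two points lying off every simple closed geodesic, obtaining a non-surjective superinjective map $\vC(S)\to\vC(S\setminus\{p\})$ (or $\vC(S\setminus\{p,q\})$); then use the classification of surfaces---gluing in a cylinder, or a disk minus a Cantor set, as the end space dictates---to identify the target with $\vC(S)$.
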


In the case of infinite-genus surfaces, a proof of part (1) Theorem \ref{thm:superinjective} was previously obtained by Hern\'andez-Valdez \cite{HV} using different methods.

\begin{rmk}
If $S$ and $S'$ are arbitrary infinite-type surfaces, then there is always an {\em injective}  simplicial map $\vC(S) \to \vC(S')$. To see this, simply observe that the curve graph of any infinite-type surface contains the complete graph on countably many vertices, and that the curve graph of any surface has a countable set of vertices. 
\end{rmk}

Theorem \ref{thm:superinjective} should be compared with Theorem \ref{thm:injections}, which implies that every superinjective map between curve graphs {\em that comes from an injective homomorphism between the corresponding mapping class groups} is induced by an embedding of the underlying surfaces. 

\medskip

\noindent{\bf Plan of the paper.} In Section 2 we will give all the necessary background needed in the article. Sections \ref{sec:torus} and \ref{sec:monster} are devoted to the proof of Theorem \ref{thm:noncoHopf} in the cases, respectively, of the one-holed torus minus a Cantor set and the once-punctured Loch Ness Monster. In Section \ref{sec:doubling} we deal with Theorem \ref{thm:doubling}.  Section \ref{sec:twist} is devoted to the proof Theorem \ref{thm:injections}. Finally, in Section \ref{sec:graphs} we establish Theorem \ref{thm:superinjective}. 

\medskip

\noindent{\bf Acknowledgements.} We are grateful to the organizers of the AIM workshop ``Surfaces of Infinite Type'' for the discussions that are the origins of this paper. Thanks also to AIM for its hospitality and financial support. We would like to thank Nick Vlamis and Henry Wilton for enlightening conversations about Lemma \ref{lem:monster}, and Vlamis for the reference \cite{Dom}. We also thank Israel Morales for bringing to our attention the refererence \cite{ARH}, and Ty Ghaswala for comments on an earlier version. 

\section{Preliminaries}\label{sec:prelim}

In this section we will briefly introduce all the background material needed for our results. We refer the reader to the survey paper \cite{AV} for a detailed account on these topics. 

\subsection{Surfaces} In what follows, by a {\em surface} we will mean a  orientable second-countable topological surface, with a finite number (possibly zero) of boundary components, all of them assumed to be compact. If the fundamental group of a surface $S$ is finitely generated, we will say that $S$ has {\em finite type}; otherwise it has {\em infinite type}. The {\em space of ends} of $S$ is \[\Ends(S):=\lim_{\leftarrow} \pi_0(S\smallsetminus K),\] where $K$ denotes a compact subset of $S$. The space of ends becomes a topological space with respect to  the final topology obtained from endowing each set defining the inverse system with the discrete topology. It is well-known that $\Ends(S)$ is a closed subset of a Cantor set.  We say that an end is {\em planar} if it has a planar neighborhood; otherwise we will say that the end is {\em non-planar} or that it is {\em accumulated by genus}. An isolated planar end is usually called a {\em puncture}; as is customary, we will treat punctures both as ends and as marked points on the surface, switching between the two viewpoints without further mention. 
Finally, we will denote by $\Ends_\infty(S)$ the subspace of non-planar ends, which is closed in $\Ends(S)$.

A classical result \cite{Richards} asserts that any surface $S$  is uniquely determined by the quadruple $(g,b, \Ends(S), \Ends_\infty(S))$, where $g\in \mathbb{N} \cup \{\infty\}$ is the genus, $b\in \mathbb{N}$ is the number of boundary components. More concretely, two surfaces are homeomorphic if and only if their genera and number of boundary components  are equal, and there is a homeomorphism between the spaces of ends that restricts to a homeomorphism between the spaces of non-planar ends. 


\subsection{Curves and multicurves} A simple closed curve on $S$ is said to be {\em inessential} if it bounds a disk, a once-punctured disk, or an annulus whose other boundary is a boundary component of $S$; otherwise we say the curve is {\em essential}.  By a {\em curve} we mean the isotopy class of an essential simple closed curve on $S$. A {\em multicurve} is a set of curves on the surface that have pairwise disjoint representatives. Given a multicurve $Q$, we will write $S \smallsetminus  Q$ to mean the (possibly disconnected) surface obtained from $S$ by cutting along (pairwise disjoint representatives of) each element of $Q$.

%

\subsection{Mapping class group} Let $\Homeo^+(S)$ be the group of all orientation-preserving self-homeomorphisms of $S$, equipped with the compact-open topology. We will denote by $\Homeo_0(S)$ the connected component of the identity in $\Homeo^+(S)$, noting it is a normal subgroup. The {\em mapping class group} of $S$ is \[\Map(S):=\Homeo^+(S) / \Homeo_0(S).\]
As is customary, if $\partial S \ne \emptyset$, in this definition we implicitly require that all homeomorphisms fix $\partial S$ pointwise.

The mapping class group is naturally a topological group with respect to the quotient topology coming from the compact-open topology on $\Homeo^+(S)$. It is a classical fact that $\Map(S)$ is a finitely presented group if $S$ has finite type, while it is easy to see that it is uncountable whenever $S$ has infinite type. Moreover, in this latter case, $\Map(S)$ is totally disconnected and not locally compact; see \cite{AV} for more details.

One of the motivating results for us is the following theorem of Bavard-Dowdall-Rafi \cite{BDR} mentioned in the introduction.

\begin{theorem}[\cite{BDR}]
For any infinite-type surface $S$, any isomorphism $\Map(S) \to \Map(S)$ is induced by a homeomorphism $S\to S$. In particular, any such isomorphism is continuous. 
\label{thm:BDR}
\end{theorem}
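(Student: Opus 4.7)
The plan is to follow the Ivanov-style template as outlined in the excerpt, adapted to the infinite-type setting. Given an isomorphism $\phi:\Map(S)\to\Map(S)$, the strategy unfolds in three stages: (i) give a purely algebraic characterization of Dehn twists in $\Map(S)$, so that $\phi$ preserves the set of Dehn twists; (ii) use this to induce a simplicial automorphism of the curve graph $\vC(S)$, then invoke the known curve-graph rigidity in infinite type to produce a homeomorphism $h:S\to S$; (iii) show that $\phi$ coincides with the homomorphism $h_*$ induced by $h$. Continuity is then automatic, since homeomorphism-induced maps are always continuous for the quotient topology coming from the compact-open topology on $\Homeo^+(S)$.

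For step (i), I would characterize Dehn twists via centralizer and commutation data in $\Map(S)$, in the spirit of the Ivanov--McCarthy characterization in the finite-type setting. The subtlety in infinite type is that there are far more commuting mapping classes than in finite type (infinite multitwists, mapping classes supported in arbitrarily complicated end neighborhoods, etc.), so one must isolate a characterization that avoids these pathologies. Useful additional inputs are the braid relation $t_\alpha t_\beta t_\alpha=t_\beta t_\alpha t_\beta$ between Dehn twists on curves with $i(\alpha,\beta)=1$, and maximality properties of the centralizer of $t_\gamma$ among rank-one abelian subgroups. The main obstacle I expect is producing a characterization that is robust against the much richer commutation structure of $\Map(S)$ in infinite type.

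For step (ii), once $\phi$ is known to preserve Dehn twists, defining $\phi_*:\vC(S)\to\vC(S)$ by $\phi(t_\gamma)=t_{\phi_*(\gamma)}$ gives a simplicial automorphism: $\phi_*$ is a bijection since $\phi$ is, and disjointness of curves is recorded by commutation of the corresponding twists. The infinite-type curve-graph rigidity theorem of Hern\'andez--Morales--Valdez (with the extensions needed for general end spaces) then produces a homeomorphism $h:S\to S$ that induces $\phi_*$.

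The remaining step, matching $\phi$ with $h_*$ on \emph{all} of $\Map(S)$ rather than only on Dehn twists, is where the infinite-type setting departs from the finite-type case: Dehn twists do not generate $\Map(S)$, and in general not even $\PMap(S)$. To close this gap, I would set $\psi:=h_*^{-1}\circ\phi\in\Aut(\Map(S))$, so that $\psi$ fixes every Dehn twist. For any $f\in\Map(S)$ and any curve $\gamma$, conjugation gives
\[
t_{\psi(f)(\gamma)}=\psi(f)\,t_\gamma\,\psi(f)^{-1}=\psi(f\,t_\gamma\,f^{-1})=\psi(t_{f(\gamma)})=t_{f(\gamma)},
\]
so $\psi(f)(\gamma)=f(\gamma)$ for every curve $\gamma$. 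An Alexander-method argument, using that a mapping class on $S$ is determined by its action on the set of isotopy classes of essential simple closed curves, then forces $\psi(f)=f$. Hence $\phi=h_*$, and the continuity assertion follows at once.
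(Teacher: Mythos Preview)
The paper does not prove this theorem: it is quoted from \cite{BDR} and used as a black box, with only the one-paragraph strategy sketch in the introduction (algebraic characterization of twists $\Rightarrow$ induced automorphism of $\vC(S)$ $\Rightarrow$ curve-graph rigidity). Your three-stage outline matches that sketch essentially verbatim, and your step (iii) via the Alexander method (Theorem~\ref{T:Alexander}) is the correct way to upgrade agreement on Dehn twists to agreement on all of $\Map(S)$.

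That said, be aware that your proposal is a plan rather than a proof: the entire difficulty lies in step (i), and you have not actually produced an algebraic characterization of Dehn twists valid in infinite type. In \cite{BDR} this is done by first characterizing finitely supported elements via the countability of their conjugacy class (this is Proposition~\ref{prop:charcompact} in the present paper), and then doing further work to isolate Dehn twists among finitely supported elements. Your suggestion to use centralizers and the braid relation alone is unlikely to suffice without something like the conjugacy-class criterion, precisely because of the ``richer commutation structure'' you flag. So while nothing in your outline is wrong, the substantive content of the theorem is in the step you left open.
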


The {\em pure mapping class group} $\PMap(S)$ is the subgroup of $\Map(S)$ whose elements fix every end of $S$.  If $S$ has finite type, it is well-known that $\PMap(S)$ is generated by Dehn twists along a finite set of curves (see \cite[Section 4.4]{FM}). When $S$ has infinite type, this is no longer true in general; instead, we have the following result; see \cite{PV} for the definition of a {\em handle shift}: 

\begin{thm}[\cite{PV}]
$\PMap(S)$ is topologically generated by Dehn twists if and only if $S$ has at most one end accumulated by genus. Otherwise, it is topologically generated by Dehn twists and  handle shifts. 
\end{thm}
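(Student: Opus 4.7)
The plan is to translate ``topologically generated'' into a concrete approximation property and then argue by exhaustion. Since $\Map(S)$ is equipped with the quotient of the compact-open topology on $\Homeo^+(S)$, a fundamental system of identity neighborhoods in $\PMap(S)$ is given by the subgroups $V_K = \{[\phi] \in \PMap(S) : \phi|_K \text{ is isotopic to } \mathrm{id}_K\}$, as $K$ ranges over compact subsurfaces of $S$. Hence a subset $T \subset \PMap(S)$ topologically generates $\PMap(S)$ if and only if, for every $f \in \PMap(S)$ and every such $K$, there is a finite word $w$ in $T^{\pm 1}$ with $f w^{-1} \in V_K$. Fixing an exhaustion $S_1 \subset S_2 \subset \cdots$ of $S$ by finite-type subsurfaces, it suffices to produce $g_n \in \langle T \rangle$ with $f g_n^{-1} \in V_{S_n}$ for each $n$.

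For the sufficiency in the one-end-accumulated-by-genus case, I would choose the exhaustion so that each complementary component of $S_n$ is either a planar surface with finitely many punctures or the unique component containing the non-planar end. Given $f \in \PMap(S)$, construct $g_n$ inductively: first, because $f$ fixes every end and the complementary components of $S_n$ have uniquely determined topological types under the single-genus-end hypothesis, a change-of-coordinates argument produces a representative of $f$ sending $S_n$ to itself; then the restriction defines an element of the pure mapping class group of the finite-type surface $S_n$, which is generated by Dehn twists supported in $S_n$, yielding $g_n$. The essential role of the hypothesis is in the first step: with two or more genus-accumulated ends, $f$ may permute complementary components of $S \setminus S_n$ of the same topological type in a way that no compactly supported homeomorphism realizes. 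The general case is reduced to the previous one by first applying a finite product of handle shifts to kill this permutation of genus-accumulated components. I expect the main obstacle to be making this reduction uniform across the exhaustion, so that the correcting handle shifts at stage $n$ are compatible with those at stage $n+1$; the plan is to fix at the outset a finite collection of handle shifts associated to pairs of genus-accumulated ends and to show that a finite subword from this collection suffices on each $S_n$.

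Finally, the necessity of handle shifts when $|\Ends_\infty(S)| \geq 2$ requires producing an invariant that separates them from the closed subgroup generated by Dehn twists. For distinct genus-accumulated ends $e, e'$ and a clopen partition $\Ends(S) = E \sqcup E^c$ separating them, I would construct a continuous ``flux'' homomorphism $\Phi : \PMap(S) \to \ZZ$ measuring, for $f \in \PMap(S)$, the net number of handles transferred across a separating curve $c$ realizing this partition by comparing $c$ with $f(c)$ (which separates $E$ from $E^c$ in the same sense, since $f$ fixes the partition). Any Dehn twist is supported on a compact subsurface and can be checked directly to lie in $\ker \Phi$, so continuity of $\Phi$ implies that the topological closure of the subgroup generated by Dehn twists is also contained in $\ker \Phi$. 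On the other hand, a handle shift between $e$ and $e'$ can be arranged so that $\Phi(h) = \pm 1$, establishing that it is not a topological limit of finite products of Dehn twists.
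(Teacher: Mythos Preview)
This theorem is not proved in the paper: it is quoted from Patel--Vlamis \cite{PV} as background, with no argument supplied. So there is no ``paper's own proof'' to compare your proposal against.

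That said, your outline is broadly in the spirit of the Patel--Vlamis argument, and the flux invariant you describe for the necessity direction is essentially the homomorphism constructed in \cite{APV} (also cited in this paper). One point of imprecision worth flagging: your explanation of why the change-of-coordinates step fails when $|\Ends_\infty(S)|\geq 2$ is not quite right. Elements of $\PMap(S)$ fix every end, so $f$ does not ``permute complementary components of $S\setminus S_n$'' in any meaningful sense --- each complementary component of $S_n$ contains a prescribed set of ends, and $f$ must send it to a subsurface containing the same ends. The genuine obstruction is subtler: even though $f$ fixes the ends, it may alter the genus carried by the compact region between $\partial S_n$ and $f(\partial S_n)$ in a way that cannot be corrected by any compactly supported homeomorphism. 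This is exactly what a handle shift does (it transports genus from a neighborhood of one non-planar end to a neighborhood of another while fixing both ends), and it is what your flux homomorphism detects. So the necessity argument is fine, but the heuristic you give for where the sufficiency argument breaks down should be rephrased in terms of genus transport rather than permutation of pieces.
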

Here, being {\em topologically generated} by Dehn twists means that the subgroup generated by Dehn twists (i.e. the {\em compactly-supported} mapping class group, see below) is dense in $\PMap(S)$. 

\subsubsection{Some important subgroups}  Following \cite{BDR}, we will say that an element of $\Map(S)$ has {\em finite support} if it represented by a homeomorphism which is the identity outside some finite-type subsurface of $S$. We will write $\Map_f(S)$ for the subgroup consisting of elements with finite support. Although not directly relevant to our arguments, we record the following beautiful result of Bavard-Dowdall-Rafi \cite{BDR}, which gives an algebraic characterization of elements with finite support. In particular, it serves to shed light on the potential  differences between isomorphisms and injective homomorphisms between big mapping class groups.

\begin{prop}[\cite{BDR}]
\label{prop:charcompact}
An element of $\Map(S)$ has finite support if and only if its conjugacy class in $\Map(S)$ is countable. 
\end{prop}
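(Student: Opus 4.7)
For the forward direction, the plan is: given $f \in \Map(S)$ supported on a finite-type subsurface $\Sigma$, every conjugate $gfg^{-1}$ is supported on $g(\Sigma)$, whose isotopy class is determined by the isotopy class of $g\Sigma$. Since $S$ is second-countable there are only countably many isotopy classes of finite-type subsurfaces of $S$, and for each such $\Sigma'$, the elements of $\Map(S)$ supported on $\Sigma'$ form the image of the finitely presented, hence countable, mapping class group of $\Sigma'$ under the inclusion-induced homomorphism. Thus the conjugacy class of $f$ is a countable union of countable sets.

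For the converse I would argue the contrapositive and, assuming $f$ has infinite support, produce $2^{\aleph_0}$ distinct conjugates. The first step is to build a locally finite sequence $c_1, c_2, \ldots$ of pairwise disjoint essential simple closed curves in $S$ with the property that $f(c_i)$ is not isotopic to any $c_j$, for every $i, j \in \NN$. Granting this, for each subset $A \subseteq \NN$ I define $g_A := \prod_{i \in A} T_{c_i} \in \PMap(S)$, which is well defined since $\{c_i\}$ is locally finite and the Dehn twists pairwise commute.

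The second step is to show that $A \mapsto g_A f g_A^{-1}$ is injective. If $g_A f g_A^{-1} = g_B f g_B^{-1}$, then $h := g_B^{-1} g_A = \prod_{i \in A \sm B} T_{c_i} \prod_{j \in B \sm A} T_{c_j}^{-1}$ centralizes $f$. Conjugation by $f$ sends $T_{c_i}$ to $T_{f(c_i)}$, so $fhf^{-1}$ is the multitwist along $\{f(c_i)\}_{i \in A \sm B} \cup \{f(c_j)\}_{j \in B \sm A}$ with corresponding signs. Since a multitwist is uniquely determined by its signed support multicurve, and $\{f(c_i)\} \cap \{c_k\} = \emptyset$ by construction, the equality $h = fhf^{-1}$ forces both signed multicurves to coincide only if they are empty, i.e., $A = B$.

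The main obstacle I expect is the topological claim underpinning the argument, namely the existence of the sequence $\{c_i\}$. The key input is that, since $f$ has infinite support, it is not isotopic to the identity on the complement of any finite-type subsurface; using that the $\Map$-action on the curve graph of an infinite-type surface has trivial kernel, it follows that for every finite-type $\Sigma \subset S$ there are in fact infinitely many essential curves $c \subset S \sm \Sigma$ with $f(c) \neq c$ (otherwise $f$ would be isotopic to the identity on a slightly larger complementary region, contradicting infinite support). I would then proceed inductively along an exhaustion $\Sigma_1 \subset \Sigma_2 \subset \cdots$ of $S$ by finite-type subsurfaces, enlarging $\Sigma_n$ at stage $n$ to contain the previously chosen $c_1, \ldots, c_{n-1}$ together with their $f$-images, and picking $c_n \subset S \sm \Sigma_n$ with $f(c_n) \neq c_n$ and $c_n \notin \{f^{-1}(c_j) : j < n\}$; only finitely many curves are excluded at each stage, so such a $c_n$ exists. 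Disjointness, local finiteness, and the key avoidance condition $f(c_i) \neq c_j$ for all $i, j \in \NN$ all follow directly from this construction.
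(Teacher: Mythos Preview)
The paper does not give its own proof of this proposition; it is quoted from Bavard--Dowdall--Rafi \cite{BDR} and recorded for context only. So there is nothing in the paper to compare your argument against directly.

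That said, your sketch is essentially the argument used in \cite{BDR}, and it is basically sound. A few comments:

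\textbf{Forward direction.} Fine as stated.

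\textbf{Reverse direction.} Your plan is correct, but two points deserve tightening:
\begin{itemize}
\item You invoke that \emph{a multitwist is uniquely determined by its signed support multicurve}, including for multitwists along infinite (locally finite) multicurves. This is true, but it is the crux of the injectivity step and should be justified or cited. One clean route: a multitwist along a locally finite multicurve $C$ fixes a curve $\alpha$ if and only if $i(\alpha,C)=0$; from this the underlying multicurve, and then the exponents, are recoverable from the mapping class.
\item The claim that there are \emph{infinitely many} curves in $S\smallsetminus\Sigma$ moved by $f$ is stronger than you need and your justification for it is vague. It suffices to know there is at least one: simply enlarge $\Sigma_n$ at each stage to contain not only $c_1,\dots,c_{n-1}$ and their $f$--images but also their $f$--preimages. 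Then any $c_n\subset S\smallsetminus\Sigma_n$ with $f(c_n)\neq c_n$ automatically satisfies $c_n\neq f^{-1}(c_j)$ for $j<n$, and the existence of such a $c_n$ follows exactly from your Alexander-method argument (if every curve outside $\Sigma_n$ were fixed, $f$ would be supported on a finite-type enlargement of $\Sigma_n$).
\end{itemize}

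With those adjustments your argument goes through.
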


The {\em compactly-supported} mapping class group $\Map_c(S)$ is the subgroup of $\Map(S)$ whose elements are represented by homeomorphisms which are the identity outside some compact subsurface of $S$. Observe that $\Map_c(S)$ is a subgroup of $\Map_f(S)$, proper when $S$ has more than one puncture, and of $\PMap(S)$.  On the other hand, $\Map_f(S)$ is a subgroup of $\PMap(S)$ if and only if $S$ has at most one puncture. Before ending this section, we record the following immediate observation for future use: 

\begin{lem}
\label{lem:direct}
For every infinite-type surface $S$,  we have \[\Map_c(S) = \lim_{\longrightarrow} \Map(X) ,\] where the direct limit is taken over all compact subsurfaces $X \subset S$, directed with respect to inclusion. 
\end{lem}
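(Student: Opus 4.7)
The plan is to show that the natural extension-by-identity maps $\iota_X : \Map(X) \to \Map(S)$, defined for each compact subsurface $X \subset S$, assemble into a homomorphism $\iota : \lim_{\longrightarrow} \Map(X) \to \Map(S)$ whose image is $\Map_c(S)$ and which is moreover injective. Each $\iota_X$ is well-defined on isotopy classes (isotopies fixing $\partial X$ pointwise extend by identity to isotopies of $S$), takes values in $\Map_c(S)$, and is compatible with the directed system of inclusions $X \subset X'$, so $\iota$ is well-defined.

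Surjectivity of $\iota$ onto $\Map_c(S)$ is immediate from the definition of $\Map_c(S)$: any $[F] \in \Map_c(S)$ has a representative $F$ supported in some compact subset $K \subset S$, so choosing a compact subsurface $X$ containing $K$ in its interior produces $[F|_X] \in \Map(X)$ with $\iota_X([F|_X]) = [F]$.

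For injectivity, suppose $[f_X] \in \Map(X)$ satisfies $\iota_X([f_X]) = 1$, so the extension $F$ of $f_X$ by identity is isotopic to $\id_S$. I would invoke the standard description of $\ker(\iota_X)$: it is generated by Dehn twists $T_c$ about boundary components $c$ of $X$ bounding a disk in $S \setminus X$, together with products $T_c T_{c'}^{-1}$ where $c, c'$ are boundary components of $X$ cobounding an annulus in $S \setminus X$. For each such generator, enlarging $X$ by the bounding disk (respectively, cobounding annulus), which is compact, gives a compact subsurface $X' \supset X$ in which that generator becomes trivial; taking $X''$ large enough to realize finitely many such enlargements at once trivializes $[f_X]$ in $\Map(X'')$, so $[f_X]$ vanishes in the direct limit. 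The main obstacle is confirming this kernel description in the infinite-type (and possibly nonempty-boundary) setting, in particular ensuring that boundary components of $X$ bounding once-punctured disks in $S$ or cobounding annuli with a component of $\partial S$ do not contribute additional kernel generators --- the corresponding Dehn twists remain nontrivial in $\Map(S)$ (the naive ``puncture rotation'' does not extend to a continuous ambient isotopy, as one can verify by an arc-winding argument), so they do not enlarge the kernel.
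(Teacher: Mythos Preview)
The paper records this lemma as an ``immediate observation'' and gives no proof, so there is nothing to compare against directly; your overall strategy is the natural one. However, there is an error in your injectivity argument: you claim that the Dehn twist $T_c$ about a boundary component $c \subset \partial X$ bounding a once-punctured disk in $S$ is nontrivial in $\Map(S)$, but this is false. Modeling the once-punctured disk as the half-open annulus $S^1 \times [0,1)$ with boundary $S^1 \times \{0\}$, the family $H_s(\theta,r)=(\theta+2\pi\phi(r)(1-s),r)$ is an explicit isotopy rel boundary from the twist (at $s=0$) to the identity (at $s=1$); equivalently, the mapping class group of a once-marked disk rel boundary is trivial, as one sees from the long exact sequence of the evaluation fibration $\Homeo^+(D,\partial D) \to \inte(D)$, both spaces being contractible. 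Your arc-winding objection fails because compactifying the punctured disk to a disk makes any two arcs from the boundary to the centre isotopic rel endpoints, and this corresponds exactly to proper isotopy of the original proper arcs.

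This is a genuine obstruction: $T_c$ therefore lies in $\ker(\iota_X)$, and if punctures are treated strictly as removed points, no compact $X'\supset X$ can absorb the once-punctured disk (the curve $c$ remains isotopic to a boundary component of every such $X'$), so $T_c$ is never trivialized in the directed system and the map $\lim_{\longrightarrow} \Map(X) \to \Map_c(S)$ would fail to be injective. The fix is to use the paper's stated convention that punctures may equivalently be regarded as marked points; then the once-punctured disk is a compact once-marked disk, it may be included in some compact $X'$, and $T_c$ already dies in $\Map(X')$. With this reading of ``compact subsurface'' your kernel analysis goes through.
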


\subsection{Curve graph}
The {\em curve graph} $\mathcal{C}(S)$ is the simplicial graph whose vertex set is the set of curves on $S$, and where two vertices are deemed to be adjacent if the corresponding curves may be realized disjointly on $S$. In what follows we will not distinguish between vertices of the curve graph and the curves representing them. 

Observe that $\Map(S)$ acts on $\mathcal{C}(S)$ by simplicial automorphisms. A classical fact, discovered initially by Ivanov \cite{Ivanov} is that every simplicial automorphism of $\vC(S)$ is induced by a homeomorphism of $S$, provided $S$ is not the twice-punctured torus. The analog for infinite-type surfaces has been obtained independently by Hern\'andez-Morales-Valdez \cite{HMV} and Bavard-Dowdall-Rafi \cite{BDR}.
A crucial step in their proofs is the following so-called {\em Alexander method} for infinite-type surfaces \cite{HMV}, which we record for future use: 

\begin{thm} \label{T:Alexander}
Let $S$ be a connected orientable infinite-type surface with empty boundary. The natural action of $\Map(S)$ on $\mathcal{C}(S)$ has trivial kernel; in other words, if $f\in \Map(S)$ induces the identity transformation on $\mathcal{C}(S)$, then it is the identity in $\Map(S)$. 
\end{thm}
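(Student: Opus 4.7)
The plan is to reduce the statement to the classical finite-type Alexander method through a compact exhaustion of $S$, and then pass to the limit using the continuity of the quotient $\Homeo^+(S)\to \Map(S)$. Suppose $f\in\Map(S)$ fixes every vertex of $\vC(S)$; the goal is to show $f=\id$.

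First, I would fix an exhaustion $\Sigma_1\subset\Sigma_2\subset\cdots$ of $S$ by compact finite-type subsurfaces with $\Sigma_n\subset\inte(\Sigma_{n+1})$ and $\bigcup_n\Sigma_n=S$, chosen so that each $\Sigma_n$ has sufficiently high complexity (say genus at least two, so the finite-type Alexander method applies without exceptional cases) and every component of $\partial \Sigma_n$ is an essential curve of $S$. The next task is to produce, for each $n$, a representative $F_n$ of $f$ satisfying $F_n(\Sigma_n)=\Sigma_n$ and $F_n|_{\partial\Sigma_n}=\id$. Since $f$ fixes each boundary curve $c_i\subset\partial\Sigma_n$ individually, some representative of $f$ can be isotoped to fix $\partial \Sigma_n$ pointwise; the key observation is that no such representative can send $\Sigma_n$ to a different component of $S\sm\partial\Sigma_n$, for then any essential curve $d\subset\Sigma_n$ not parallel to $\partial\Sigma_n$ would be mapped to a curve disjoint from $\Sigma_n$, contradicting $f(d)=d$ together with the incompressibility of $\partial\Sigma_n$ in $S$.

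With $F_n$ in hand, $F_n|_{\Sigma_n}$ defines an element of $\Map(\Sigma_n)$ that fixes every vertex of $\vC(\Sigma_n)$, since essential curves in $\Sigma_n$ remain essential in $S$. By the classical Alexander method for finite-type surfaces with boundary, the kernel of the action $\Map(\Sigma_n)\to\Aut(\vC(\Sigma_n))$ is generated by Dehn twists along curves parallel to components of $\partial\Sigma_n$. Hence $F_n|_{\Sigma_n}$ is isotopic to a product of such boundary-parallel twists, all supported in a collar of $\partial\Sigma_n$. After absorbing this product inside the collar, I obtain a representative $G_n$ of $f$ that equals the identity on the core $\Sigma_n':=\Sigma_n\sm(\text{open collar of }\partial\Sigma_n)$.

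Finally, the cores $\{\Sigma_n'\}$ still exhaust $S$, so for any compact $K\subset S$ we eventually have $K\subset \Sigma_n'$ and thus $G_n|_K=\id$. This gives $G_n\to\id$ in the compact-open topology on $\Homeo^+(S)$, and continuity of the quotient map $\Homeo^+(S)\to\Map(S)$, together with the Hausdorffness of this topological group, forces $f=\lim_n[G_n]=\id$. I expect the main obstacle to be the setwise-preservation claim in the first step: showing that $f$ preserves each $\Sigma_n$ rather than permuting it with another complementary component of $S\sm\partial\Sigma_n$ is precisely where the hypothesis that \emph{every} essential curve of $S$, and not merely those lying on $\partial\Sigma_n$, is fixed by $f$ gets used in an essential way.
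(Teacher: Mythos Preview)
The paper does not supply its own proof of this statement; Theorem~\ref{T:Alexander} is quoted from Hern\'andez--Morales--Valdez \cite{HMV} as background, so there is no in-paper argument to compare against.

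Your strategy is sound: restrict to a compact exhaustion, apply the finite-type Alexander method on each piece to trivialize the restriction modulo boundary twists, and then pass to the limit using continuity of the quotient map and Hausdorffness of $\Map(S)$. One point needs adjustment. You propose taking each $\Sigma_n$ of genus at least two, but $S$ may be planar (for instance $\RR^2$ minus a Cantor set), in which case no compact subsurface has positive genus. What you actually need is only that $\Sigma_n$ avoid the short list of low-complexity surfaces for which the kernel of $\Map(\Sigma_n)\to\Aut(\vC(\Sigma_n))$ is strictly larger than the subgroup generated by boundary twists; in the planar case this just means taking enough boundary components. You should also choose the exhaustion so that distinct components of $\partial\Sigma_n$ are pairwise non-isotopic in $S$ and no complementary region of $\Sigma_n$ is a disk, annulus, or once-punctured disk: this is what ensures $F_n$ cannot permute boundary components, that essential curves in $\Sigma_n$ remain essential in $S$, and that an isotopy in $S$ between two curves of $\Sigma_n$ can be taken inside $\Sigma_n$ (all of which you use implicitly in Steps~2--3). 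With these routine adjustments the argument goes through.
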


\section{Covers and forgetting}

In this section we prove Theorem~\ref{thm:noncoHopf} and Theorem \ref{thm:decreasing}.  These theorems are about homomorphisms between mapping class groups of surfaces with a single puncture, and the proofs exploit their relationship with automorphism groups and the Birman exact sequence.  We start with some generalities, then focus on the specific cases of the theorems.

\subsection{Covers and automorphism groups} In what follows, suppose $S$ is an orientable, connected surface without punctures such that $\pi_1(S)$ is non-abelian.
Given $z \in S$ any point, let $S^z  = S \smallsetminus \{z\}$ denote the surface obtained from $S$ by removing $z$ (thus producing a puncture we call the {\em $z$--puncture}).  Any homeomorphism $f$ of $S^z $ induces a homeomorphism of $S$ that fixes the point $z$, which by an abuse of notation we also denote by $f$.  This defines a homomorphism
\[ \Homeo^+(S^z ) \to \Homeo^+(S). \]
This homomorphism sends $\Homeo_0(S^z )$ into $\Homeo_0(S)$ and so induces a homomorphism
\[ \Map(S^z ) \to \Map(S).\]

Given a loop $\gamma$ representing an element of $\pi_1(S,z)$, one associates a homeomorphism $f_\gamma \colon S^z  \to S^z $ by ``point pushing along $\gamma$".  More precisely, $f_\gamma \colon S \to S$ is a homeomorphism isotopic to the identity by an isotopy $f_t$ with $f_0 = f_\gamma$, $f_1 = \id$, and $f_t(z) = \gamma(t)$, for all $t \in [0,1]$.  When $S$ has finite type, Birman proved that $[\gamma] \mapsto [f_\gamma]$ defines an injective homomorphism $\pi_1(S,z) \to \Map(S^z )$ in such a way that the sequence
\begin{equation} \label{E:BES} \xymatrix{ 1 \ar[r] & \pi_1(S,z) \ar[r] & \Map(S^z ) \ar[r] & \Map(S) \ar[r] & 1 } \end{equation}
is exact; see \cite{Bir}.  The infinite type case is proved by Dickmann-Domat in the appendix to Domat's paper \cite{Dom}.
\begin{prop} \label{P:BES}
For any connected, orientable surface $S$ without punctures and non-abelian fundamental group, there is an injection of $\pi_1(S,z) \to \Map(S^z )$ given by $[\gamma] \mapsto [f_\gamma]$ making \eqref{E:BES} exact. \qed
\end{prop}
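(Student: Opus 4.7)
The plan is to verify four claims: (a) $F\colon \Map(S^z) \to \Map(S)$ is surjective, (b) the assignment $P\colon [\gamma] \mapsto [f_\gamma]$ is a well-defined group homomorphism $\pi_1(S,z) \to \Map(S^z)$, (c) $\ker(F) = \Im(P)$, and (d) $P$ is injective. For (a), I would use that $\Homeo_0(S)$ acts transitively on interior points of $S$: given $[h] \in \Map(S)$, choose $g \in \Homeo_0(S)$ with $g(h(z)) = z$, so that $gh$ fixes $z$ and its restriction to $S^z$ maps to $[h]$ under $F$. Transitivity on a connected manifold is standard, obtained by covering a path from $h(z)$ to $z$ by small disks and composing isotopies supported in them.

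For (b) and (c), I would establish path- and homotopy-lifting for the evaluation map $e_z\colon \Homeo_0(S) \to S$, $f \mapsto f(z)$. For finite-type surfaces $e_z$ is a locally trivial fibration by Hamstrom's work. For infinite-type surfaces, both lifting properties can be proved directly: any path or homotopy in $S$ based at $z$ has compact image contained in a finite-type subsurface of $S$, and one constructs a compactly supported tracking isotopy via point-pushing along a tubular neighborhood. Given a loop $\gamma$ at $z$, a path-lift $(f_t)$ satisfies $f_0 = \id$ and $f_t(z) = \gamma(t)$; setting $P([\gamma]) := [f_1] \in \Map(S^z)$ is then independent of choices by homotopy-lifting, and concatenation of lifts shows $P$ is a homomorphism. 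The containment $\Im(P) \subseteq \ker(F)$ is immediate from the construction. Conversely, if $[f] \in \ker(F)$ is represented by $\bar f \in \Homeo_0(S)$ fixing $z$, any isotopy $(f_t)$ from $\id$ to $\bar f$ in $\Homeo_0(S)$ yields a loop $\gamma(t) := f_t(z)$ with $P([\gamma]) = [f]$ in $\Map(S^z)$.

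The main obstacle is (d), injectivity of $P$, and this is where the non-abelian hypothesis on $\pi_1(S)$ enters. From the long exact sequence associated to $e_z$, with fiber the stabilizer $\Homeo_0(S,z) := \{f \in \Homeo_0(S) : f(z) = z\}$, injectivity of $P$ is equivalent to the triviality of the image of $\pi_1(\Homeo_0(S), \id) \to \pi_1(S,z)$. For finite-type surfaces with non-abelian fundamental group, Hamstrom's theorem asserts that $\Homeo_0(S)$ is contractible, so the source vanishes. For infinite-type $S$, the analogous vanishing is the substantive content of the Dickmann--Domat appendix to \cite{Dom}; it is established by approximating loops in $\Homeo_0(S)$ by loops through compactly supported homeomorphisms, reducing to Hamstrom's theorem on an exhaustion of $S$ by finite-type subsurfaces. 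The non-abelian hypothesis on $\pi_1(S)$ is essential, excluding the genuinely exceptional surfaces (sphere, plane, torus, annulus) in which the conclusion fails. Granting this input, the exact sequence \eqref{E:BES} follows.
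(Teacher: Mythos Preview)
Your outline is correct, and in fact goes well beyond what the paper does: the paper does not prove Proposition~\ref{P:BES} at all, but simply records it as known, citing Birman \cite{Bir} for the finite-type case and the Dickmann--Domat appendix to \cite{Dom} for the infinite-type case (note the \qed immediately after the statement). Your sketch unpacks the standard fibration argument behind those references and correctly isolates the one nontrivial ingredient---triviality of the image of $\pi_1(\Homeo_0(S)) \to \pi_1(S,z)$, which for infinite-type $S$ is exactly the content of the cited appendix---so there is nothing to correct; you have simply supplied the argument the paper chose to black-box.
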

For the remainder of this section, we use this theorem to identify $\pi_1(S,z)$ with its image in $\Map(S^z)$.

\medskip

\noindent {\bf Remark.} The theorem also holds when $S$ has punctures, replacing $\Map(S^z )$ with the subgroup preserving the $z$--puncture.

\medskip

Given $f \in \Homeo^+(S^z )$, after extending over $z$ we have an induced automorphism $f_* \in \Aut(\pi_1(S,z))$, which descends to a homomorphism $\iota \colon \Map(S^z ) \to \Aut(\pi_1(S,z))$ given by $\iota([f]) = f_*$.  This further descends to a homomorphism $\Map(S) \to \Out(\pi_1(S))$ which is injective by Theorem~\ref{T:Alexander}.  The inclusion of $\pi_1(S,z)$ into $\Map(S^z )$, composed with $\iota$ is precisely the isomorphism onto the group of inner automorphisms, and thus we get a homomorphism of short exact sequences:
\[ \xymatrixrowsep{.5cm}\xymatrix{ 1 \ar[r] & \pi_1(S,z) \ar[r] \ar[d] & \Map(S^z ) \ar[r] \ar[d]^\iota & \Map(S) \ar[r] \ar[d] & 1 \,\, \\
1 \ar[r] & \pi_1(S,z) \ar[r] & \Aut(\pi_1(S,z)) \ar[r] & \Out(\pi_1(S,z)) \ar[r] & 1.
}\]
The first vertical map is the identity and the last is injective, from which it follows that the middle is also injective.

A regular covering space $p \colon \wt S \to S$ with the property that every homeomorphism of $S$ lifts to a homeomorphism of $\wt S$ will be called a {\em geometrically characteristic} cover.  If $p_*(\pi_1(\wt S, \wt z))$ is characteristic, general map lifting implies that the cover is geometrically characteristic, but this is not a necessary condition in general; see the examples below.

Now suppose $p \colon \wt S \to S$ is geometrically characteristic, let $z \in S$ be a point, and fix any $\wt z \in p^{-1}(z)$.  Being regular and geometrically characteristic implies that for any $f \in \Homeo^+(S^z)$, after extending over $z$, there is a unique lift $\wt f \colon \wt S \to \wt S$ that fixes $\wt z$, and thus determines a homeomorphism of the same name $\wt f \in \Homeo^+(\wt S^{\wt z})$.  The general fact we need is the following (compare with \cite[Theorem~2]{ALS}).
\begin{prop} \label{P:lifting general} If $p \colon \wt S \to S$ is a geometrically characteristic covering space, $\pi_1(S)$ is non-abelian, and $\pi_1(\wt S)$ is non-trivial, then the assignment $f \mapsto \wt f$ described above descends to a continuous, injective homomorphism
\[ \phi \colon \Map(S^z) \to \Map(\wt S^{\wt z}).\]
Moreover, via the inclusions from the Birman exact sequence we have $\phi \circ p_* = \id|_{\pi_1(\wt S,\wt z)}$.
\end{prop}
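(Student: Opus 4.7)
The plan is to build $\phi$ at the homeomorphism level, descend to mapping class groups, verify the identity $\phi\circ p_*=\id$, and then address injectivity. Given $f\in \Homeo^+(S^z)$, extend over the end at $z$ to a homeomorphism of $S$ fixing $z$; the geometrically characteristic hypothesis supplies a lift to $\Homeo^+(\wt S)$, and since $p$ is regular the deck group $\Deck(p)$ acts transitively on $p^{-1}(z)$, so there is a unique lift $\wt f$ normalized by $\wt f(\wt z)=\wt z$. Restricting $\wt f$ to $\wt S^{\wt z}$ yields an assignment $f\mapsto\wt f$ that is a homomorphism by uniqueness of the normalized lift and is continuous for the compact-open topology. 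To descend to $\Map$, an isotopy $f_t\subset\Homeo^+(S^z)$ extends to a continuous path in $\Homeo^+(S)$ of homeomorphisms fixing $z$, and at each time one obtains the unique lift $\wt f_t\in\Homeo^+(\wt S)$ with $\wt f_t(\wt z)=\wt z$; this varies continuously with $t$ and restricts to an isotopy in $\Homeo^+(\wt S^{\wt z})$.

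The identity $\phi\circ p_*=\id$ follows by unpacking point-pushes: for $\wt\gamma\in\pi_1(\wt S,\wt z)$ with projection $\gamma=p\circ\wt\gamma$, the defining isotopy of $f_\gamma$ satisfies $f_t(z)=\gamma(t)$ and $f_1=\id$; lifting with $\wt f_1=\id$, the path $t\mapsto \wt f_t(\wt z)$ is a lift of $\gamma$ ending at $\wt z$, hence equal to $\wt\gamma$ itself. In particular, $\wt f_0$ fixes $\wt z$ and realizes $\wt{f_\gamma}$ as the point-push along $\wt\gamma$, so $\phi(f_\gamma)=f_{\wt\gamma}$.

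The main obstacle is injectivity, which I plan to prove in two steps. Suppose $\phi(f)=1\in\Map(\wt S^{\wt z})$ and let $\bar f\in\Map(S)$ be its Birman projection. Then $\wt f=\phi(f)$ projects in $\Map(\wt S)$ to $\wt{\bar f}=1$, so the normalized lift of $\bar f$ is isotopic to the identity in $\Homeo^+(\wt S)$. An Alexander-method argument then yields $\bar f=1\in\Map(S)$: for any $c\in\vC(S)$, choose a component $\wt c_0$ of the preimage $p^{-1}(c)$ (a simple closed curve or a proper arc in $\wt S$); since $\wt{\bar f}\simeq\id$, one has $\wt{\bar f}(\wt c_0)\simeq \wt c_0$ in $\wt S$, and projecting implies $\bar f(c)\simeq c$ in $S$. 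Hence $\bar f$ acts trivially on $\vC(S)$, so $\bar f=1$ by Theorem~\ref{T:Alexander}, and $f=f_\gamma$ for some $\gamma\in\pi_1(S,z)$. If $\gamma\in H:=p_*(\pi_1(\wt S,\wt z))$, then $\phi(f_\gamma)=f_{\wt\gamma}$ by the identity already verified, and triviality forces $\wt\gamma$, hence $\gamma$, to be trivial. If $\gamma\notin H$, then $\wt{f_\gamma}$ projects in $\Map(\wt S)$ to the nontrivial deck transformation corresponding to the coset $\gamma H\in\pi_1(S,z)/H\cong\Deck(p)$, contradicting $\phi(f)=1$. Thus $\gamma=1$ and $f=1$.
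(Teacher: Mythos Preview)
Your construction of $\phi$, its continuity, and the verification of $\phi\circ p_*=\id$ by lifting the point-push isotopy are all correct, and the last of these is arguably cleaner than the paper's route through inner automorphisms.

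The injectivity argument, however, has a genuine gap. In your final case $\gamma\notin H$, you assert that the Birman projection of $\wt{f_\gamma}$ to $\Map(\wt S)$ is ``the nontrivial deck transformation corresponding to $\gamma H$''. It is indeed that deck transformation, but the word \emph{nontrivial} here must mean nontrivial \emph{in $\Map(\wt S)$}, and this is precisely the substantive point you have not justified. Equivalently (using $\Map(\wt S)\hookrightarrow\Out(\pi_1(\wt S))$) you need that conjugation by $\gamma\notin H$ restricted to $H$ is not an inner automorphism of $H$; that is, you need the centralizer of $H$ in $\pi_1(S,z)$ to be trivial. A similar issue lurks in your Alexander-method step: Theorem~\ref{T:Alexander} is stated only for infinite-type surfaces, while the proposition allows, say, the closed genus-$2$ surface, where the hyperelliptic involution acts trivially on $\vC(S)$ and your deduction $\bar f=1$ fails as written.

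The paper's proof handles both issues at once by working directly in $\Aut(\pi_1)$ and proving the centralizer fact head-on: if $f_*\in\Aut(\pi_1(S,z))$ restricts to the identity on the normal subgroup $G=p_*\pi_1(\wt S,\wt z)$, then for every $a\in\pi_1(S,z)$ and $x\in G$ one has $axa^{-1}=f_*(a)xf_*(a)^{-1}$, so $a^{-1}f_*(a)$ centralizes $G$; since centralizers in a non-abelian surface group are cyclic and $G$, being nontrivial and normal, contains two elements whose centralizers intersect trivially, this forces $f_*(a)=a$. This single computation replaces both your curve-complex step and your deck-transformation step, and it works uniformly under the stated hypotheses. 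Your Birman-based decomposition can be made to work, but not without inserting exactly this argument.
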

\begin{proof}  Continuity of $\Homeo^+(S^z) \to \Homeo^+(\wt S^{\wt z})$ is clear.  Since $f \mapsto \wt f$  maps $\Homeo_0(S^z)$  to $\Homeo_0(\wt S,\wt z)$ (by lifting isotopies), we get a well-defined, continuous homomorphism $\phi \colon \Map(S^z) \to \Map(\wt S^{\wt z})$. Since the homomorphisms $\iota \colon \Map(S^z) \to \Aut(\pi_1(S,z))$ and $\tilde \iota \colon \Map(\wt S^{\wt z}) \to \Aut(\pi_1(\wt S,\wt z))$ are injective, to prove the first part of the proposition it suffices to prove injectivity of the homomorphism $\phi_* \colon \iota(\Map(S^z)) \to \wt \iota(\Map(\wt S^{\wt z}))$ defined by $\phi_* \circ \iota = \wt \iota \circ \phi$, or more explicitly
\[ \iota([f]) = f_* \stackrel{\phi_*}\longmapsto \tilde f_* =\tilde \iota([\tilde f]) = \tilde \iota(\phi([f])).\]
Consequently, we need only show that the kernel of this homomorphism is trivial.

To this end, suppose that $f \in \Homeo^+(S^z)$ is any element such that $\tilde f_*$ is the identity.
Let \[G = p_*(\pi_1(\wt S,\wt z)) \triangleleft \pi_1(S,z),\] so that $p_*$ is an isomorphism from $\pi_1(\wt S,\wt z)$ onto the normal subgroup $G$ (normality follows from regularity of $p$).  Since $\widetilde f$ is a lift of $f$ that preserves $\wt z$, we have that
\[ p_* \circ \tilde f_* = f_*  \circ  p_*.\]
In particular, this means that $f_*$ preserves $G$ and $p_*$ conjugates $\widetilde f_*$ to $f_*|_G$.  Since $\tilde f_*$ is the identity, this implies that $f_*|_G$ is the identity.  We claim that $f_*$ is the identity.  To prove this, first observe that for all $x \in G$ and $a \in \pi_1(S,z)$, we have $axa^{-1} \in G$, and thus 
\[ axa^{-1} = f_*(axa^{-1}) = f_*(a)f_*(x) f_*(a)^{-1} = f_*(a)xf_*(a)^{-1}.\]
This implies that for all $a \in \pi_1(S,z)$ and all $x \in G$, $a^{-1}f_*(a)$ commutes with $x$.

Since $\pi_1(S,z)$ is a non-abelian surface group (either free or a closed surface group), we know that centralizers of elements are cyclic, and since $G$ is a non-trivial normal subgroup of $\pi_1(S,z)$, we can find two elements $x,y \in G$ who centralizers intersect trivially.   Now for any $a \in \pi_1(S,z)$, $a^{-1}f_*(a)$ is in the centralizer of $x$ and $y$, and thus $a^{-1}f_*(a) = 1$.  Therefore, $f_*(a) = a$, and hence $f_*$ is the identity proving that $\phi_*$, and hence $\phi$, is injective.

To prove the last statement, given any element $g$ of a group, let $c_g$ denote the inner automorphism determined by conjugating by $g$.  From the inclusions in the Birman exact sequence as noted above, given any $[\gamma] \in \pi_1(\wt S,\wt z)$ we have
\[ \wt \iota([\gamma]) = c_{[\gamma]} \quad \mbox{ and } \quad \iota(p_*([\gamma])) = c_{p_*([\gamma])}.\]
Since $\phi_* \circ \iota = \wt \iota \circ \phi$, we have
\[ \wt \iota (\phi(p_*([\gamma]))) =  \phi_*(\iota(p_*([\gamma]))) = \phi_*(c_{p_*([\gamma])}) = c_{[\gamma]} = \wt \iota([\gamma]),\]
where the second-to-last equality comes from the fact that the isomorphism described above, $p_* \colon \pi_1(\wt S,\wt z) \to G< \pi_1(S,z)$, conjugates $c_{[\gamma]}$ to $c_{p_*([\gamma])}$, while the argument above shows that it conjugates $\phi_*(c_{p_*([\gamma])})$ to this as well, hence $c_{[\gamma]} = \phi_*(c_{p_*([\gamma])})$.
Therefore, $\wt \iota \circ \phi \circ p_* = \wt \iota$ and since $\wt \iota$ is injective, it follows that $\phi \circ p_*$ is the identity on $\pi_1(\wt S,\wt z)$, as required.
\end{proof}

\subsection{The once-punctured torus minus a Cantor set}
\label{sec:torus}

Here we prove Theorem \ref{thm:noncoHopf} in one of the two cases.
\begin{proof}[Proof of Theorem~\ref{thm:noncoHopf} for the once-punctured torus minus a Cantor set] 
Let $T$ be a torus and choose, once and for all, a Cantor set $C\subset T$, set $S = T \smallsetminus C$, fix $z \in S$, and write $S^z  = S \smallsetminus z$ as above.
Then $C \cup \{z\}$ is canonically homeomorphic to $\Ends(S^z )$ and $T$ is the end-compactification of $S^z $.  Consider the surjective homomorphism 
\[  \pi_1(S,z) \to \pi_1(T, z) \cong  \ZZ \times \ZZ\] given by ``filling in'' every element of $C$.
Fixing any integer $m \geq 2$ and reducing modulo $m$, we get a surjective homomorphism
\[
\rho:\pi_1(T,z) \to \ZZ_m \times \ZZ_m.
\]
Let $p:\wt T \to T$ be the regular cover corresponding to $\ker(\rho)$, and fix some $\tilde z \in p^{-1}(z)$.  Write $\wt C = p^{-1}(C)$, and observe that the surface $\wt S = \wt T \smallsetminus \wt C$ is homeomorphic to $S$, and $\wt S^{\wt z} = \wt S \smallsetminus \{\wt z\}$ is homeomorphic to $S^z$.

Every homeomorphism of $T$ lifts to $\wt T$ since the kernel of \[\pi_1(T,z) \to \ZZ_m \times \ZZ_m\] is characteristic.  Moreover, every homeomorphism of $S$ extends uniquely to a homeomorphism of $T$ and hence the restricted covering (with the same name) $p \colon \wt S \to S$ is geometrically characteristic.  Proposition~\ref{P:lifting general} therefore provides an injective, continuous homomorphism $\phi \colon \Map(S^z) \to \Map(\wt S^{\wt z})$.  Since $\wt S^{\wt z}$ and $S^z$ are homeomorphic surfaces,  by choosing a homeomorphism between them we can view $\phi$ as a continuous homomorphism $\phi \colon \Map(S^z) \to \Map(S^z)$.  We have already shown that $\phi$ is injective. We now verify the remaining properties claimed in the statement: \\

\noindent{\em Compact to non-finite support.} 
To prove part \eqref{item:main nonfinitetype} of Theorem \ref{thm:noncoHopf} we will show that the $\phi$--image of the Dehn twist $t_a \in \Map_c(S^z)$ about a non-separating curve $a$ is not supported on any finite-type subsurface of $S^z$. 

In order to achieve this it suffices to find a point of $\tilde C$ that is not fixed by our chosen lift $\wt t_a= \phi(t_a)$.  Let $\beta \subset S^z$ be an arc which begins at $z$ and ends at a point $e \in C$ so that $\beta$ essentially intersects $a$ exactly once.  If $\tilde \beta$ is the lift of $\beta$ starting at $\wt z$ then $\wt t_a(\tilde \beta)$ also starts at $\wt z$, but these two arcs terminate at distinct points in the preimage of $e$.  Therefore $\phi(t_a)$ does not have finite support. (Observe, however, that there is a nontrivial power of $\phi(t_a)$ that has finite support; this will not happen in the case of the Loch Ness Monster.) \\ 

\noindent{\em Not co-Hopfian.}  Suppose $\phi$ were surjective, and thus an automorphism.  Theorem~\ref{thm:BDR} implies that $\phi$ is induced by a homeomorphism of $S^z$, and in particular preserves the property of having finite support, which is a contradiction to the previous paragraph.  This proves part \eqref{item:main noncohopf} \\

\noindent {\em Non-geometric.}
To prove part \eqref{item:main nongeometric}, and thus complete the proof of Theoreom \ref{thm:noncoHopf}, we must show that there are partial pseudo-Anosovs in $\Map(S^z)$ that map to multitwists in $\Map(\wt S^{\wt z})$.  The proof is essentially the same as that of  \cite[Theorem 2(1)]{ALS}; we sketch it for completeness.

As in \cite{ALS}, one can find a loop $\gamma$ representing an element in $\pi_1(\wt S,\wt z)$ which is simple, but for which $p_*([\gamma])$ is not represented by any simple closed curve.  The mapping class associated to $[\gamma]$ via the Birman exact sequence is a multi-twist about the boundary of a regular neighborhood of $\gamma$, while by a result of Kra \cite{Kra}, $p_*([\gamma])$ is a pseudo-Anosov on $X \smallsetminus \{z\}$, where $X$ is the subsurface filled by a loop representing $p_*([\gamma])$ with minimal self-intersection.  Since $\phi \circ p_*([\gamma])= [\gamma]$ by Proposition~\ref{P:lifting general}, it follows that $p_*([\gamma])$ is pseudo-Anosov on a proper subsurface while $\phi(p_*([\gamma]))$ is a multi-twist.  This completes the proof of the theorem in this case.
\end{proof}

\subsection{The Loch Ness Monster}
\label{sec:monster}
In this section we prove Theorem \ref{thm:noncoHopf} for the once-punctured Loch Ness Monster. In what follows,  $S$ will denote the {\em Loch Ness Monster}, that is, the connected orientable surface of infinite genus and exactly one end. As in the case of a torus minus a Cantor set, we fix once and for all, a point $z\in S$.  We will again apply Proposition~\ref{P:lifting general} for an appropriate cover $p \colon \wt S \to S$ to induces an injective homomorphism $\phi \colon \Map(S^z) \to \Map(\wt S^{\wt z})$.  This time the cover will be of infinite degree which, in turn, will be the key to the existence of compactly-supported elements with image for which no non-trivial power has compact support. 
Consider the homomorphism \[\rho: \pi_1(S,z) \to H_1(S, \mathbb{Z}_2)\] obtained by first abelianizing and then reducing modulo 2. Observe that $H:=\ker(m)$ is a characteristic subgroup of $\pi_1(S,z)$. Let $p:\tilde S \to S$ be the cover associated to $H$, which is usually called the {\em mod-2 homology cover} of $S$. We claim: 

\begin{prop}
$\wt S$ is homeomorphic to $S$. 
\end{prop}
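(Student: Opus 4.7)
The plan is to apply the classification of surfaces, which reduces the proposition to verifying that $\wt S$ has empty boundary, infinite genus, and exactly one end. Emptiness of the boundary and second countability are clear: $\wt S$ covers a boundaryless surface and the deck group $H_1(S,\ZZ_2)$ is a countable direct sum of copies of $\ZZ_2$, so the cover has countable degree. Infinite genus follows from a short Riemann--Hurwitz computation applied to compact subsurfaces of $S$ of growing genus.

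The main task is to show that $\wt S$ has exactly one end (which is then automatically non-planar, as $\wt S$ has infinite genus). I will do so by constructing a compact exhaustion $\wt K_1^0 \subset \wt K_2^0 \subset \cdots$ of $\wt S$ for which each complement $\wt S \smallsetminus \wt K_n^0$ is connected. Start with an exhaustion $K_1 \subset K_2 \subset \cdots$ of $S$ by compact subsurfaces with $K_n$ of genus $n$ and connected boundary $c_n$, and set $V_n = S \smallsetminus K_n$. Since $c_n$ separates $S$, it is null-homologous mod $2$, so it lifts to a countable disjoint union of loops in $\wt S$. Take $\wt K_n^0$ to be the component of $p^{-1}(K_n)$ containing a chosen basepoint; each such component is a $4^n$-fold cover of $K_n$, hence compact, and a path-lifting argument shows that the sets $\wt K_n^0$ exhaust $\wt S$.

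The key structural observation exploits the direct sum decomposition
\[
H_1(S,\ZZ_2) = A_n \oplus B_n,
\]
where $A_n$ and $B_n$ are the images of $H_1(K_n, \ZZ_2)$ and $H_1(V_n, \ZZ_2)$ respectively. A coset count shows that $p^{-1}(K_n)$ has components indexed by $B_n$ (countably many, each a compact $4^n$-fold cover of $K_n$ with exactly $4^n$ boundary circles), while $p^{-1}(V_n)$ has components indexed by $A_n$ (exactly $4^n$, each non-compact). Using that the deck group acts freely and transitively on the lifts of $c_n$, one then verifies that each component of $p^{-1}(K_n)$ is glued to each component of $p^{-1}(V_n)$ along exactly one boundary circle.

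With this adjacency in hand, connectedness of $\wt S \smallsetminus \wt K_n^0$ is immediate: pick any $\wt K_n^w$ with $w \neq 0$; as this connected compact piece is adjacent to every one of the $4^n$ components of $p^{-1}(V_n)$, it joins them into a single connected set, and every other component of $p^{-1}(K_n) \smallsetminus \wt K_n^0$ glues in through its own boundary circles. So $\wt S \smallsetminus \wt K_n^0$ is connected for every $n$, and $\wt S$ has a unique end. The main obstacle I anticipate is not conceptual but organizational: aligning the decomposition $H_1(S,\ZZ_2) = A_n \oplus B_n$ with the combinatorics of the lifts of $c_n$ requires care, but once the setup is in place the adjacency count and the connectedness argument are routine.
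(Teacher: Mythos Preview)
Your argument is correct and complete. The exhaustion by the basepoint components $\wt K_n^0$ does work (the path-lifting you allude to is simply: given $\wt x \in \wt S$, take a path in $\wt S$ from $\wt z$ to $\wt x$, project to $S$, and note the image lies in some $K_n$), and the bipartite adjacency count via the splitting $H_1(S,\ZZ_2)=A_n\oplus B_n$ is exactly right: the lifts of $c_n$ are freely and transitively permuted by the deck group, so the lift labeled $(a,b)$ is the unique interface between the $K_n$-piece indexed by $b\in B_n$ and the $V_n$-piece indexed by $a\in A_n$. Removing a single $K_n$-piece from the complete bipartite dual graph $K_{B_n,A_n}$ leaves a connected graph, hence a connected complement.

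Your route, however, is genuinely different from the paper's. The paper does \emph{not} analyze the cover directly. Instead it observes that $S$ is itself the commutator cover of a closed genus-$2$ surface $\Sigma$, so that $\wt S\to\Sigma$ is a regular cover with some deck group $D$; by the \v{S}varc--Milnor Lemma $D$ is quasi-isometric to $\wt S$, and by Stallings' theorem $D$ has $1$, $2$, or infinitely many ends. The argument then rules out the multi-ended cases by producing, via an equivariant map to the Bass--Serre tree, a non-separating simple closed curve on $\Sigma$ acting elliptically, invoking characteristicity to make \emph{all} such curves elliptic, and applying Serre's criterion to force a global fixed point, a contradiction. So the paper's proof is shorter but imports substantial geometric group theory (Stallings, Bass--Serre, Serre's criterion), whereas yours is entirely elementary covering-space combinatorics and gives an explicit picture of how the pieces of $\wt S$ fit together. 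Your approach also has the minor advantage of yielding the infinite-genus and one-end conclusions by the same mechanism, rather than appealing to the restricted list of possible homogeneous infinite-type surfaces.
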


\begin{proof}
First, observe that the Loch Ness Monster $S$ is a characteristic cover of the closed surface $\Sigma$ of genus $2$; more precisely, it is the covering surface associated to the commutator subgroup of $\pi_1(\Sigma)$. Since $\wt S$ is a characteristic cover of $S$, it follows that $\wt S$ is also a characteristic cover of $\Sigma$. 

Let $H$ be the characteristic subgroup of $\pi_1(\Sigma)$ corresponding to $\pi_1(\wt S)$ and $D = \pi_1(\Sigma)/H$ the group of deck transformations of $\tilde S \to \Sigma$.  Since this action is properly discontinuous and cocompact, the \u{S}varc-Milnor Lemma (see e.g. \cite{BH}) implies that $D$ is quasi-isometric to $\wt S$.  By Stallings' Theorem \cite{Stallings} $D$, or equivalently $\tilde S$, has one end, two ends, or infinitely many ends.  By the classification of infinite-type surfaces \cite{Richards}, it follows that $\wt S$ is homeomorphism to either the Loch-Ness Monster $S$; Jacob's Ladder $L$; the Cantor tree $T$; or the Blooming Cantor Tree $B$; see \cite{BV} for details.   Since $S$ is the only one of these that has one end, we suppose $\wt S$ has more than one end and derive a contradiction.

To this end, we appeal to Stallings Theorem again and note that $D$ admits a non-trivial action on a tree $A$ with finite edge stabilizers and without edge inversions.
From this action construct an equivariant map $f \colon \wt S \to A$, which we can assume is transverse to the union of midpoints $X \subset T$ of every edge, so that $f^{-1}(X)$ is a properly embedded $1$--submanifold which is $D$--invariant (c.f. \cite{Shalen}). This descends to a closed $1$--submanifold in $\Sigma$, and via the action of $\pi_1(\Sigma) \to D$ on $A$, any loop in the complement of the $1$--submanifold fixes a vertex.  Observe that there is a non-separating simple closed curve in a component of this complement (this is true for any $1$--submanifold in the genus $2$ surface $\Sigma$) which, when viewed as an element of $\pi_1(\Sigma)$,  fixes a vertex. Since the cover is characteristic, every non-separating simple closed curve fixes a vertex of $A$.

Now choose a generating set $a_1,\ldots,a_4$ so that all $a_i$ are simple as are $a_ia_j$, for all $i \neq j$.  Then all $a_i$ and $a_ia_j$ act elliptically, in which case Serre's criterion \cite{Serre} implies $\pi_1(\Sigma)$ fixes a point of $A$, which is a contradiction.
\end{proof}

\begin{proof}[Proof of Theorem \ref{thm:noncoHopf} for the once-punctured Loch Ness Monster]
Fixing $\wt z \in p^{-1}(z)$, Proposition~\ref{P:lifting general} implies that $p$ induces an injective homomorphism $\phi \colon \Map(S^z) \to \Map(\wt S^{\wt z})$.

Since $p$ has infinite degree, there are Dehn twists whose image does not have compact support, even up to taking powers. For the same reason, $\phi$ is not surjective. The fact that there exist partial pseudo-Anosovs whose image is a Dehn twist follows along the exact same lines as in the torus case. Finally, since $\wt S$ and $S$ are homeomorphic, so are $\wt S^{\wt z}$ and $S^z$, and thus we may view the homomorphism $\phi$ as an injective, but not surjective, endomorphism of $\Map(S^z)$. This finishes the proof of the Theorem in the case of the Loch Ness Monster. 
\end{proof}

\begin{rmk}
After this article was finished, we learned that Proposition \ref{Monster} had already been established in \cite[Proposition 4.1]{ARH}.
\end{rmk}

\subsection{Decreasing genus}   Our final application of Proposition~\ref{P:lifting general} is the following.
\begin{proof}[Proof of Theorem~\ref{thm:decreasing}]
For $g\ge 1$, let $\Sigma_g$ denote the surface of genus $g\geq 1$, let $C_g \subset \Sigma_g$ be a Cantor set, and $S = \Sigma_g \smallsetminus C_g$.  Let $p \colon \wt \Sigma \to \Sigma_g$ be the universal cover.  Choosing a disk $D \subset \Sigma$ that contains $C_g$, we note that $p^{-1}(C_g)$ is a disjoint union of Cantor sets that only accumulate at infinity of $\wt \Sigma$.  Consequently, since the one-point compactification of $\wt \Sigma$ is the sphere, it follows that
\[ \wt S = \wt \Sigma \smallsetminus p^{-1}(C_g)\]
is homeomorphic to the $2$--sphere minus a Cantor set.

Now observe that $p \colon \wt S \to S$ is a geometrically characteristic cover, and so for any basepoint $z \in S$ and choice of $\wt z \in p^{-1}(z)$, Proposition~\ref{P:lifting general} implies that $p$ induces an injective homomorphism $\phi \colon \Map(S^z) \to \Map(\wt S^{\wt z})$.  Observe that $S^z$ is a surface of genus $g$ minus a Cantor set and an isolated point, while $\wt S^{\wt z}$ is a $2$--sphere minus a Cantor set and an isolated point, which is also homeomorphic to the plane $\mathbb R^2$ minus a Cantor set.  This completes the proof.
\end{proof}

\section{Self-Doubling}
\label{sec:doubling}
In this section we will prove Theorem \ref{thm:doubling} and provide a large class of self-doubling surfaces.  We will use an argument similar to the example in \cite[Section 2]{IM}, although adapted to the case of surfaces without boundary. 

\subsection{The construction} Let $\bar S$ be an orientable connected surface with non-empty boundary. In this section, we still require each boundary component of $\bar S$ be compact, although this time we allow the set of boundary components to be countable. Let $\vB = \{b_1, b_2, \dots \}$ be a (possibly finite) subset of the set of boundary components, and $S$ be the surface that results from $\bar S$ by gluing disks with marked points $z_i$ onto $b_i$. This operation gives rise to a {\em boundary deleting} homomorphism $ \PMap(\bar S) \to \PMap(S) $, which fits in a short exact sequence

\begin{equation}
\label{eq:delete}
1 \to \Pi_i \langle T_{b_i} \rangle \to \PMap(\ol S) \to \PMap(S) \to 1,
\end{equation}
see \cite{FM} for details. Now suppose $d_\vB S$ is the surface obtained by gluing two disjoint copies of $\bar S$ along the boundary components in $\vB$. This operation induces two natural inclusion maps
\[
\psi_1: \bar S \hookrightarrow d_\vB S \eand \psi_2: \bar S \hookrightarrow d_\vB S
\]
such that $\inte(\psi_1(\bar S))\cap \inte (\psi_2(\bar S)) =\emptyset$ and  $\psi_1(b_i) = \psi_2(b_i)$ is an essential curve. We abuse notation by writing $b_i$ and $\vB$ for the images of $b_i$ and $\vB$, respectively.  Furthermore, there is an induced orientation-reversing homeomorphism \[\iota: d_\vB S \to d_\vB S\] that swaps the images of $\psi_1$ and $\psi_2$ and, in particular, fixes the set $\vB \subset d_\vB S$.
On the level of mapping class groups, we have two injective homomorphisms
\[
\Psi_1 : \PMap(\bar S) \hookrightarrow \PMap(d_\vB S) \eand \Psi_2 : \PMap(\bar S) \hookrightarrow \PMap(d_\vB S),
\]
such that for all $f \in \PMap(\ol S)$, we have $\Psi_1(f) = f_1$, $\Psi_2(f) = f_2$, where $f_1 = \iota f_2 \iota^{-1}$; here we consider $f_1$, $f_2$, and $\iota$ as elements of $\Map^{\pm}(d_\vB S$).  In particular, a left twist about $b_i$ is sent to a left twist by $\Psi_1$, and to a right twist by $\Psi_2$.
Consider now the map
\[
d : \PMap(\bar S) \to \PMap (d_\vB S)
\]
such that $d(f) = f_1 f_2$.  Note that $d$ is indeed a homomorphism as the images of $\Psi_1$ and $\Psi_2$ commute.  Furthermore, the kernel of $d$ is equal to $\Pi_i \langle T_{b_i} \rangle$, so from the short exact sequence \eqref{eq:delete} we have an induced injective homomorphism
\[
\PMap(S) \hookrightarrow \PMap(d_\vB S).
\]

\subsection*{Self-doubling surfaces} With the doubling construction to hand, we now prove Theorem \ref{thm:doubling}. To this end, we are tasked with finding a surface $\bar S$ with boundary components $\vB$ such that the surfaces $S$ and $d_\vB S$ constructed above are homeomorphic.  Note that this is equivalent to the definition of self-doubling from the introduction.

\begin{proof}[Proof of Theorem \ref{thm:doubling}]
Suppose the surface $S$ is self-doubling, and let $L$ and $R$ be the subsurfaces whose interiors are homeomorphic to $S$.  We then define $\vB = \partial L$.  Following the doubling construction above, we have the injective homomorphism
\[
\PMap(S) \cong \frac{\PMap(L)}{\Pi_i \langle T_{b_i} \rangle} \hookrightarrow \PMap(S),
\]
where $b_i \in \vB$.
\end{proof}

In the remainder of this section, we give conditions on the space of ends giving rise to self-doubling surfaces.

\subsection{Ends and orbits}

A surface $S$ is \emph{stable} if for any end $e \in \Ends(S)$ there exists a sequence of nested subsurfaces $U_1 \supset U_2 \supset \dots$ defining $e$ such that $U_i \cong U_{i+1}$ \cite{FGM}.  Here, we call each $U_i$ a \emph{stable neighborhood} of $e$.

We say that an end $e \in \Ends(S)$ is of \emph{higher rank} than $e' \in \Ends(S)$ if each stable neighborhood of $e$ contains an element of the $\Map(S)$-orbit of $e'$.  Finally, we define $\vF \subset \Ends(S)$ to be the set of ends whose $\Map(S)$-orbit is finite \cite{FGM}.



We can now prove the first case of Theorem \ref{thm:doubling}.

\begin{thm}\label{thm:double-easy}
Let $S$ be a stable surface with infinitely many punctures, and suppose the genus of $S$ is either zero or infinite.  If $\vF=\emptyset$ then $S$ is self-doubling.
\end{thm}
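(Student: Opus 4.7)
The plan is to produce the required multicurve $\vB$ by first constructing a suitable clopen partition of the end space and then realizing it geometrically together with an involution.

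First, I would establish the end-theoretic input: under the hypotheses (stable, no finite $\Map(S)$-orbits of ends, genus in $\{0,\infty\}$), I claim there is a clopen partition $\Ends(S) = E_L \sqcup E_R$ together with a homeomorphism $E_L \to E_R$ carrying $E_L \cap \Ends_\infty(S)$ onto $E_R \cap \Ends_\infty(S)$. The key ingredient is that stability plus $\vF = \emptyset$ forces a strong self-similarity of $\Ends(S)$: by iterating the rank relation and invoking stability of neighborhoods, every nonempty clopen set in $\Ends(S)$ that meets a maximal-rank orbit contains a clopen copy of $\Ends(S)$ (preserving the non-planar part). Starting with any separating simple closed curve yielding an initial partition, one side contains such a clopen copy of $\Ends(S)$; then by repeatedly absorbing ``missing'' pieces using countably many disjoint clopen copies of $\Ends(S)$ on either side, one obtains the desired symmetric partition.

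Second, I would realize the partition geometrically. Using the standard change-of-coordinates principle for infinite-type surfaces, together with the clopen nature of $E_L,E_R$, I would produce a locally finite pairwise disjoint collection $\vB$ of separating simple closed curves so that $S \sm \vB$ has exactly two components $L$ and $R$ with $\Ends(\inte L) \cong E_L$ and $\Ends(\inte R) \cong E_R$. Since the genus of $S$ is either $0$ or $\infty$, I can simultaneously arrange the genus distribution so that $\inte L$ and $\inte R$ each have the same genus as $S$, with non-planar ends accumulated by genus in the correct way. Richards' classification of surfaces then yields $\inte L \cong \inte R \cong S$.

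Finally, the order-two orientation-reversing homeomorphism $\iota$ is built by first taking a homeomorphism $L \to R$ inducing the chosen homeomorphism $E_L \to E_R$ on ends (possible by Richards and step two), then gluing along $\vB$ via an orientation-reversing reflection in an annular collar of each $b \in \vB$. Symmetrising the construction on $R$ ensures $\iota^2 = \id$ and fixes each $b$ setwise.

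The main obstacle is the first step: extracting from stability and $\vF = \emptyset$ the precise form of self-similarity of $\Ends(S)$ needed to bisect it, and doing so compatibly with $\Ends_\infty(S)$ in the infinite-genus case. Everything downstream is an application of Richards' classification and the change-of-coordinates principle.
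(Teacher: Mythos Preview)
Your route is workable in principle but is considerably more involved than the paper's, and there is one genuine gap you do not address.

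The paper uses the equivalent characterization of self-doubling established earlier in the section: it suffices to exhibit a bordered surface $\ol S$ with boundary $\vB$ so that capping $\vB$ by punctured disks gives $S$ and doubling along $\vB$ also gives $S$. The paper takes $\vB$ to be a \emph{single} curve, the boundary of a disk around one of the infinitely many punctures; then $(\Ends(d_b S),\Ends_\infty(d_b S)) \cong (\Ends(S),\Ends_\infty(S)) \sqcup (\Ends(S),\Ends_\infty(S))$, and the entire argument reduces to showing this is homeomorphic to $(\Ends(S),\Ends_\infty(S))$. That in turn follows because stability together with $\vF=\emptyset$ forces the finitely many maximal-rank orbits to be Cantor sets, so the end space decomposes into pieces $E(e_i)$ each satisfying $E(e_i)\sqcup E(e_i)\cong E(e_i)$. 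No bisection of the end space is attempted, and the involution comes for free from the doubling construction.

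Your plan instead bisects $\Ends(S)$ and works with the original definition. The gap: when you cut $S$ along $\vB$, the component $L$ acquires new isolated planar ends, one for every curve of $\vB$ it meets; thus $\Ends(L)$ is not $E_L$ but $E_L$ together with these extra punctures, and you must show this enlarged pair is still homeomorphic to $(\Ends(S),\Ends_\infty(S))$. This is precisely where the hypothesis ``infinitely many punctures'' is needed (to absorb the new ends), yet you never invoke it. The involution step is also underspecified: gluing a homeomorphism $\bar L\to\bar R$ to its inverse yields an involution only if that homeomorphism restricts to an involution on $\vB$, which requires an argument. Both issues are fixable, but the paper's approach sidesteps them entirely by working with the double rather than a bisection.
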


\begin{proof}
Let $\ol S$ be the surface obtained by removing an open disk surrounding one puncture $z$.  Let $d_b S$ be the doubled surface of $\ol S$ along the sole boundary component $b$.  The induced map on the space of ends gives us the homeomorphism
\begin{align*}
( \Ends(d_b S) , \Ends_\infty(d_b S) ) &= ( \Ends(\ol S) , \Ends_\infty(\ol S) )\cup ( \Ends(\ol S), \Ends_\infty(\ol S) ) \\ 
&\cong ( \Ends(S) , \Ends_\infty(S) )\cup ( \Ends(S), \Ends_\infty(S) ).
\end{align*}
Now, suppose $e \in \Ends(S)$ has highest rank and denote the $\Map(S)$-orbit of $e$ by $O(e)$.  Since $O(e)$ is infinite, there must be an accumulation point of its elements in $\Ends(S)$.  However, as each element of $O(e)$ is of highest rank this accumulation point must also belong to $O(e)$; it follows that $O(e)$ is a Cantor set, see \cite[Proposition 4.7]{MR} for more details.

Since $S$ is stable, there exists a maximal finite set of highest rank ends $e_1, e_2, \dots, e_n$ such that $O(e_i) \neq O(e_j)$.  Indeed, if there exist infinitely many highest rank ends with different orbits then any accumulation point of such ends would not have a stable neighborhood.  Write $E(e_i)$ for the end space of a stable neighborhood of $e_i$ and $E_\infty(e_i) = E(e_i) \cap \Ends_\infty(S)$.  Since each $E(e_i)$ is a Cantor set we have that
\[
( \Ends(S) , \Ends_\infty(S) ) \cong \bigcup_i ( E(e_i), E_\infty(e_i)) \cong \bigcup_i ( E(e_i), E_\infty(e_i) ) \cup ( E(e_i), E_\infty(e_i) ).
\]
We can now conclude that
\begin{align*}
( \Ends(d_b S) , \Ends_\infty(d_b S) ) &\cong ( \Ends(S) , \Ends_\infty(S) )\cup ( \Ends(S), \Ends_\infty(S) ) \\
&\cong \bigcup_i ( E(e_i), E_\infty(e_i)) \cup ( E(e_i), E_\infty(e_i)) \\ 
&\cong ( \Ends(S) , \Ends_\infty(S) ),
\end{align*}
so $S$ and $d_\vB S$ are homeomorphic.
\end{proof}

\subsection*{Doubling along infinitely many boundary components}

We define an end to be \emph{truly non-planar} if it is non-planar and has a neighborhood that does not contain a puncture.

\begin{figure}[t]
\centering
\includegraphics[scale=1]{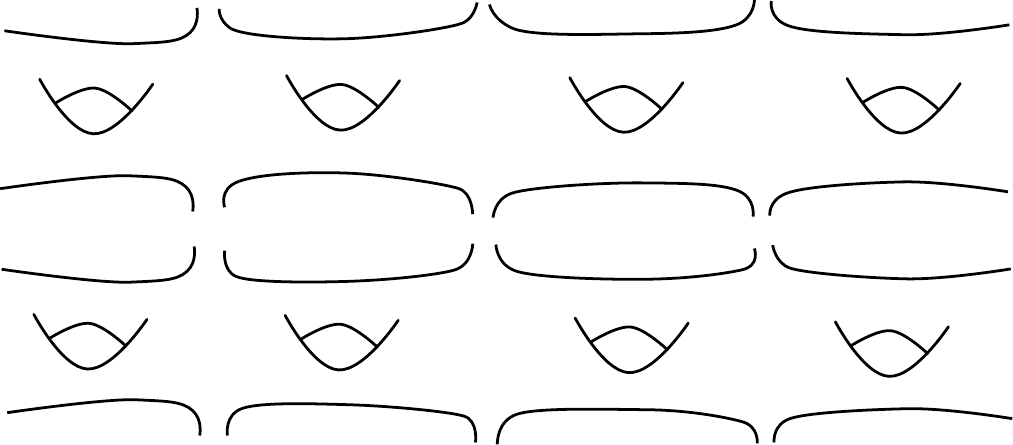}
\caption{The figure shows two copies of the surface $S$.  The double obtained using half the punctures is also homeomorphic to $S$.  There is a natural orientation reversing homeomorphism $\iota$ which fixes the boundary components of $\ol S$.}
\label{DoubleLadder}
\end{figure}

\begin{thm}\label{thm:double-hard}
Suppose $S$ is a stable surface of infinite genus with infinitely many punctures.  If $\vF$ contains no planar nor truly non-planar ends then $S$ is self-doubling.
\end{thm}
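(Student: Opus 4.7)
The strategy is to extend Theorem~\ref{thm:double-easy} by doubling along infinitely many boundary components chosen to match the accumulation structure of the end space. First, I partition the (countably infinite) puncture set as $P = P(S) = P_1 \sqcup P_2$ into two infinite subsets satisfying $\overline{P_1} = \overline{P_2} = \overline{P}$ in $\Ends(S)$. This is achievable by a diagonal argument using a countable basis for $\Ends(S)$: enumerate the basic open sets meeting $P$ in an infinite set and successively assign punctures to $P_1$ and $P_2$ so that each such basic set receives infinitely many of both. Let $\ol S$ be $S$ with small pairwise disjoint open disks removed around each puncture in $P_2$, and let $\vB$ denote the resulting infinite set of boundary circles. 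The construction in Section~\ref{sec:doubling} then yields an injective homomorphism $\PMap(S) \hookrightarrow \PMap(d_\vB S)$, and the goal becomes to show $d_\vB S \cong S$.

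Since both $S$ and $d_\vB S$ have infinite genus and no boundary, by \cite{Richards} it suffices to construct a homeomorphism of pairs $(\Ends(d_\vB S), \Ends_\infty(d_\vB S)) \cong (\Ends(S), \Ends_\infty(S))$. I analyze $\Ends(d_\vB S)$ end-by-end as follows: for each end $e$ of $\ol S$, its contribution to $\Ends(d_\vB S)$ is one end if $e$ is accumulated by circles of $\vB$ (equivalently $e \in \overline{P_2} = \overline{P}$), because infinitely many gluings between the two copies of $\ol S$ occur in any neighborhood of $e$ and keep the corresponding neighborhood connected; and it is two ends otherwise, as some neighborhood of $e$ in $\ol S$ avoids $\vB$ entirely and lifts to two disjoint neighborhoods in $d_\vB S$. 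Consequently,
\[
\Ends(d_\vB S) = (P_1 \sqcup P_1) \sqcup (\overline{P}\setminus P) \sqcup \bigl((\Ends_\infty(S)\setminus \overline{P}) \sqcup (\Ends_\infty(S)\setminus \overline{P})\bigr),
\]
with non-planar ends given by the last two summands.

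It remains to match this against $\Ends(S) = P \sqcup (\overline{P}\setminus P) \sqcup (\Ends_\infty(S)\setminus \overline{P})$. By hypothesis $\vF \subseteq \overline{P}\setminus P$, so every truly non-planar end has infinite $\Map(S)$-orbit; arguing as in the proof of Theorem~\ref{thm:double-easy} via stability and the orbit/rank analysis of \cite{FGM, MR}, the set $\Ends_\infty(S)\setminus \overline{P}$ decomposes into finitely many orbit Cantor sets of highest-rank truly non-planar ends and is therefore homeomorphic to two disjoint copies of itself. For the planar-plus-accumulated part $\overline{P}$, the pair $(\overline{P}, P_1 \sqcup P_1)$ is homeomorphic to $(\overline{P}, P)$ by a standard back-and-forth argument, since both $P$ and $P_1 \sqcup P_1$ are countable discrete dense subsets of $\overline{P}$ (using $\overline{P_1} = \overline{P}$) accumulating at every point of $\overline{P}\setminus P$. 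Combining these identifications yields the required homeomorphism of pairs, so $d_\vB S \cong S$. The main obstacle is ensuring the end-space homeomorphism treats the finite-orbit ends correctly; this is handled by the choice $\overline{P_1} = \overline{P_2} = \overline{P}$, which guarantees that each end of $\vF$ appears uniquely (not doubled) in $\Ends(d_\vB S)$, while the infinite-orbit ends double consistently with the Cantor-like structure induced by their orbits.
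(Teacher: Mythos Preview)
There is a genuine gap. In your construction, every end $e \in \overline{P}\setminus P$ is accumulated by the circles in $\vB$ (by your density requirement on $P_2$), so in $d_\vB S$ the two copies of any neighborhood of $e$ are glued along infinitely many circles; an elementary Euler-characteristic count shows this creates infinite genus near $e$, and hence every such $e$ is non-planar in $d_\vB S$. You record this correctly (``non-planar ends given by the last two summands''), but your matching with $(\Ends(S),\Ends_\infty(S))$ then tacitly assumes that every end of $\overline{P}\setminus P$ was already non-planar in $S$. The hypotheses do not force this: they say only that planar ends lie outside $\vF$, i.e.\ have infinite orbit, not that every planar end is a puncture. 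For a concrete failure, let $S$ be the Loch Ness Monster with a Cantor set $C$ removed from a disk, together with punctures accumulating at every point of $C$ and at the unique non-planar end $\infty$. Here $\vF=\{\infty\}$, which is non-planar and accumulated by punctures (hence not truly non-planar), so the hypotheses hold; yet every $c\in C$ is a planar end in $\overline{P}\setminus P$. Your $d_\vB S$ then has $\Ends_\infty(d_\vB S)\supseteq C\cup\{\infty\}$, which is uncountable, while $\Ends_\infty(S)=\{\infty\}$, so $d_\vB S\not\cong S$.

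The paper's proof avoids this by placing boundary circles \emph{only} inside stable neighborhoods of the finitely many ends in $\vF$; since those ends are non-planar by hypothesis, the extra genus created by doubling lands at ends that were already non-planar, and the local end-space analysis there uses the nested stable-neighborhood structure rather than a global back-and-forth. The remaining piece $P_0$ (containing no $\vF$ ends) is simply duplicated and handled by the Cantor-orbit argument of Theorem~\ref{thm:double-easy}. Two smaller issues in your write-up: your three-piece decomposition of $\Ends(S)$ omits planar non-puncture ends lying outside $\overline{P}$ (nothing in the hypotheses rules these out), and the condition $\overline{P_1}=\overline{P_2}=\overline{P}$ is impossible as stated since punctures are isolated in $\Ends(S)$; presumably you intend equality of derived sets.
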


\begin{proof}
If $\vF$ is empty then we the result follows from Theorem \ref{thm:double-easy}.  As in the previous subsection, since $S$ is stable we have that $|\vF|=n$ is finite as otherwise there would be an accumulation point without a stable neighborhood \cite{FGM}.  Therefore we have a partition of $\Ends(S)$
\[
\Ends(S) = P_0 \cup P_1 \cup \dots \cup P_n
\]
such that $P_0$ contains no elements of $\vF$, and all other $P_k$ contain a single element of $\vF$.  Moreover, we may choose each $P_k$ to be the end space of a stable neighborhood of the corresponding element of $\vF$.

Our strategy is to choose an infinite collection of punctures in each set $P_k$, where $k \ge 1$, remove disk neighborhoods of these punctures producing a surface $\overline{S}$ with boundary $\vB$,  then prove that the resulting surface $d_\vB S$ is homeomorphic to $S$.  Note that since no element of $\vF$ is truly non-planar, each $P_k$ does indeed contain infinitely many punctures, when $k \ge 1$.

Suppose $k \ge 1$ and denote by $e$ the unique end in $P_k \cap \vF$.  Let $U_0 \supset U_1 \supset \dots$ be a nested sequence of homeomorphic subsurfaces with connected boundary such that $\Ends(U_0) = P_k$.  Define $X_i = U_i \sm U_{i+1}$.  Without loss of generality we may assume that $X_i \cong X_{i+1}$ and that $X_i$ contains at least two punctures (it may contain infinitely many punctures).  Now, remove an open neighborhood of one puncture in each $X_i$.  Repeating this process for each $P_k$ (where $k \ge 1$) results in a surface $\ol S$ and a set of boundary components $\vB$.  We define $d_\vB S$ as usual, and write $d_\vB (U_i) \subset d_\vB S$ for the subsurface obtained by doubling $U_i$.

Note that $d_\vB (U_0) \supset d_\vB(U_1) \supset \dots$ is a nested sequence of homeomorphic subsurfaces, in particular, it defines a unique end of $d_\vB S$.  We will now show that the end spaces of $d_\vB (U_0)$ and $U_0$ are homeomorphic.  Indeed, if each $X_i$ has infinitely many punctures, then by construction we have that
\[
\big ( \Ends( d_\vB (X_i) ), \Ends_\infty( d_\vB (X_i) ) \big  ) \cong \big ( \Ends( X_i \cup X_{i+1}), \Ends_\infty( X_i \cup X_{i+1})\big ).
\]
However, since $U_0 \supset U_2 \supset U_4 \supset \dots$ also defines the end $e$, it follows that the end spaces of $d_\vB(U_0)$ and $U_0$ are homeomorphic.  This shows that $d_\vB (U_0)$ is homeomorpic to $U_0$ with an open disk removed, that is, $d_\vB (U_0)$ has two boundary components.

If each $X_i$ contains finitely many punctures then the argument above follows similarly as both $d_\vB(U_0)$ and $U_0$ contain infinitely many punctures with a single accumulation point.  Finally, since $P_0$ contains no elements of $\vF$, the argument from Theorem \ref{thm:double-easy} implies that $P_0 \cong P_0 \cup P_0$.  It follows that

\begin{align*}
( \Ends(d_\vB S), \Ends_\infty(d_\vB S)) &\cong ( P_0 \cup P_0 ) \cup P_1 \cup \dots P_n \\ 
&\cong P_0 \cup P_1 \cup \dots P_n \\
&\cong ( \Ends(S), \Ends_\infty(S))
\end{align*}
Since $d_\vB S$ has infinite genus and no boundary components, it is homeomorphic to $S$.
\end{proof}

\section{Twist-pair preserving homomorphisms}
\label{sec:twist}
The purpose of this section is to prove Theorem \ref{thm:injections}. The key ingredient of our arguments is that a twist-pair preserving homomorphism preserves certain combinatorial configurations of curves that we term {\em spanning chains}, and which we now define; the reader should keep Figure \ref{generators} in mind: 

\begin{figure}[t]
\begin{center}
\labellist \small \hair 1pt
	\pinlabel {$\alpha_1$} at 29 105
	\pinlabel {$\alpha_2$} at 65 104
	\pinlabel {$\alpha_3$} at 105 101
	\pinlabel {$\alpha_4$} at 145 105
	\pinlabel {$\alpha_5$} at 180 100
	\pinlabel {$\alpha_6$} at 205 100
	\pinlabel {$\gamma_1$} at 225 115
	\pinlabel {$\gamma_2$} at 280 104
	\pinlabel {$\gamma_3$} at 285 74
	\pinlabel {$\gamma_4$} at 286 49
	\pinlabel {$\gamma_5$} at 283 20
	\pinlabel {$\gamma_6$} at 215 25
	\pinlabel {$\beta$} at 140 10
    \endlabellist
\includegraphics[scale=1]{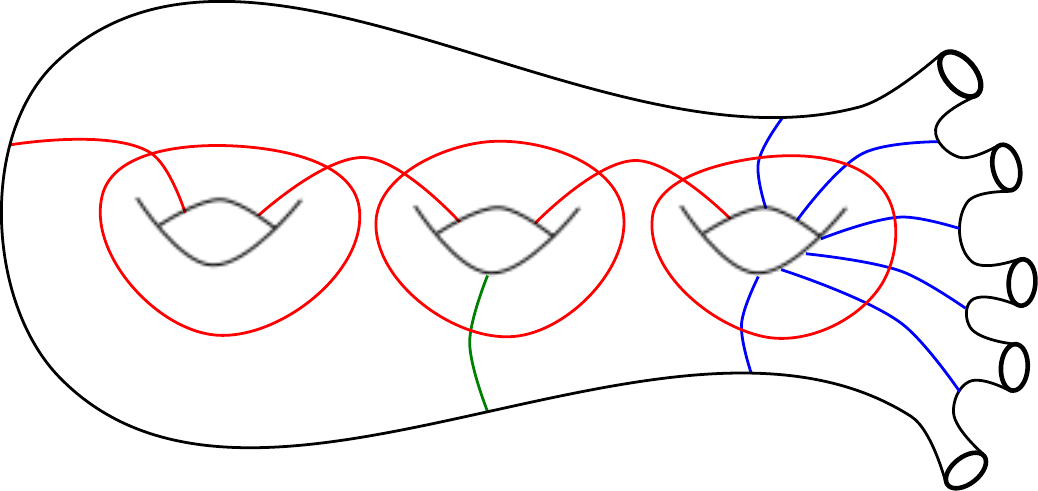}
\end{center}
\caption{A spanning chain for the surface $S_{3,5}$.}
\label{generators}
\end{figure}

\begin{defn}
Let $S_{g,p}$ be the connected orientable surface of genus $g$ and for which the number of punctures plus boundary components is $p$.  A {\em spanning chain} is a set\[\{\beta, \alpha_1, \alpha_2, \ldots, \alpha_{2g-1}, \alpha_{2g}, \gamma_1, \ldots, \gamma_{p+1}\}\] of non-separating curves on $S_{g,p}$ such that: 
\begin{itemize}
\item $i(\beta, \alpha_4) = 1$ and $i(\beta, \alpha_j) = i(\beta, \gamma_i)= 0$ for all $i$ and all $j\ne 3$.
\item $i(\alpha_i, \alpha_{i+1}) = 1$, and $i(\alpha_i, \alpha_{j}) = 0$ otherwise. 
\item $i(\gamma_i, \alpha_{2g})= 1$, for all $i$, and $i(\gamma_i, \alpha_{j})= 0$ otherwise. 
\item $i(\gamma_i, \gamma_j)= 0$ for all $i,j$. 
\end{itemize}
\end{defn}

Observe that any surface filled by a collection of distinct curves satisfying the conditions in the definition of a spanning chain above must have genus $g$ since a regular neighborhood of the union of the curves can be reconstructed from the data, up to homeomorphism (not necessarily preserving the names of the curves); see again Figure \ref{generators}. We will need the following well-known fact; see \cite[Section 4.4.4.]{FM} for instance: 

\begin{prop}
Suppose $g\ge 1$. If $C$ is a spanning chain on $S_{g,p}$, then the Dehn twists along the elements of $C$ generate $\PMap(S_{g,p})$. 
 \label{prop:humphreys}
\end{prop}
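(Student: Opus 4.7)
The plan is to prove this by induction on $p$, reducing to the classical Humphries theorem for closed surfaces via iterated applications of the Birman exact sequence (for punctures) or the capping exact sequence (for boundary components). Throughout, let $H \leq \PMap(S_{g,p})$ denote the subgroup generated by the Dehn twists along the curves in the spanning chain.

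For the base case $p = 0$, the spanning chain is $\{\beta, \alpha_1, \ldots, \alpha_{2g}, \gamma_1\}$, and the first $2g+1$ of these curves form a chain-plus-transverse-curve configuration on the closed surface $S_g$; by Humphries' classical theorem, Dehn twists along them already generate $\Mod(S_g) = \PMap(S_{g,0})$, so the extra twist about $\gamma_1$ is redundant.

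For the inductive step, let $p \geq 1$. After a change of coordinates carried out within $H$, one can arrange that $\gamma_{p+1}$ bounds a once-punctured disk (or a collar of a boundary component) containing a single end $e$, and then consider the exact sequence
\[ 1 \to K \to \PMap(S_{g,p}) \to \PMap(S_{g,p-1}) \to 1 \]
obtained by filling in or capping $e$, where $K$ is either $\pi_1(S_{g,p-1},e)$ or $\langle T_{\gamma_{p+1}}\rangle$. The images of the remaining curves form a spanning chain on $S_{g,p-1}$, so by the inductive hypothesis $H$ surjects onto $\PMap(S_{g,p-1})$, and it remains to verify the inclusion $K \subseteq H$.

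The boundary case is immediate since $T_{\gamma_{p+1}} \in H$, so the content is the puncture case. The key observation is that each pair $(\gamma_i, \gamma_{p+1})$ with $i \leq p$ forms a bounding pair in $S_{g,p-1}$, so $T_{\gamma_i}T_{\gamma_{p+1}}^{-1}$ lies in $K$ and corresponds under Birman's identification to a point-push along an explicit loop. The hard part will be verifying that these bounding-pair maps, together with their conjugates by elements of the Humphries subgroup $\langle T_\beta, T_{\alpha_1}, \ldots, T_{\alpha_{2g}}\rangle$, realize point-pushes along a complete generating set for $\pi_1(S_{g,p-1},e)$. This should follow from a change-of-coordinates argument: the Humphries subgroup acts with enough transitivity on pairs of disjoint non-separating simple closed curves each intersecting $\alpha_{2g}$ transversely in a single point, so that a given bounding pair can be translated to realize a point-push along any prescribed loop, thereby producing all of $K$ and completing the induction.
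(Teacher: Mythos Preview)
The paper does not supply its own proof of this proposition; it is stated as a well-known fact with a reference to \cite[Section~4.4.4]{FM}. Your inductive strategy via the Birman and capping exact sequences is exactly the argument one finds there, so the overall plan is sound.

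There is, however, a genuine geometric error in the setup of your inductive step. You assert that after a change of coordinates within $H$ one can arrange that $\gamma_{p+1}$ bounds a once-punctured disk or a boundary collar. This is impossible: by the definition of a spanning chain every curve, and in particular every $\gamma_i$, is \emph{non-separating}, and no change of coordinates alters that. What is actually true is that the consecutive curves $\gamma_p$ and $\gamma_{p+1}$ together cobound a once-punctured annulus (or an annulus containing one boundary component of $S$); filling that puncture makes $\gamma_p$ and $\gamma_{p+1}$ isotopic in $S_{g,p-1}$, and the images of $\{\beta,\alpha_1,\ldots,\alpha_{2g},\gamma_1,\ldots,\gamma_p\}$ then form a spanning chain there, as you need. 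The same misconception infects your treatment of the kernel: for $i<p$ the element $T_{\gamma_i}T_{\gamma_{p+1}}^{-1}$ does \emph{not} lie in $K$, since after filling only that one puncture the curves $\gamma_i$ and $\gamma_{p+1}\sim\gamma_p$ remain non-isotopic. Only $T_{\gamma_p}T_{\gamma_{p+1}}^{-1}$ lies in $K$, and it is the point-push along a non-separating simple loop through the filled point. From here the argument is in fact simpler than you suggest: since $H$ surjects onto $\PMap(S_{g,p-1})$ by induction and $K$ is normal, $H\cap K$ contains every conjugate of this single point-push, hence point-pushes along every non-separating simple based loop, and these already generate $\pi_1(S_{g,p-1})$.
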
 

We will also make use of the {\em braid relation} between Dehn twists; see e. g. \cite[Section 3.5.1]{FM}:

\begin{lem}
Let $\alpha,\beta$ be curves on a surface. Then \[t_\alpha t_\beta t_\alpha = t_\beta t_\alpha t_\beta\] if and only if $i(a,b)=1$. 
\label{lem:braid}
\end{lem}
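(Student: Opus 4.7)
The plan is to prove both implications from two standard ingredients: the \emph{change-of-coordinates identity} $t_{f(\gamma)} = f\, t_\gamma \, f^{-1}$, valid for any $f \in \Map(S)$ and any essential simple closed curve $\gamma$, and the fact that a Dehn twist determines its core curve up to isotopy. Throughout, I take $\alpha$ and $\beta$ to be distinct (the degenerate case $\alpha = \beta$ makes both sides of the braid relation trivially equal but is not the case of interest, and is not used in the paper).

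For the direction $i(\alpha,\beta) = 1 \Rightarrow t_\alpha t_\beta t_\alpha = t_\beta t_\alpha t_\beta$, the key step is to verify the curve-level identity $t_\alpha(\beta) = t_\beta^{-1}(\alpha)$ by a direct picture in a regular neighborhood of $\alpha \cup \beta$, which is a one-holed torus. Applying the change-of-coordinates formula then gives
\[ t_\alpha \, t_\beta \, t_\alpha^{-1} \;=\; t_{t_\alpha(\beta)} \;=\; t_{t_\beta^{-1}(\alpha)} \;=\; t_\beta^{-1}\, t_\alpha\, t_\beta, \]
which rearranges to the desired braid relation.

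For the converse, I would run the same algebra in reverse: the braid relation yields $t_{t_\alpha(\beta)} = t_{t_\beta^{-1}(\alpha)}$, and by the curve-recovery property we extract $t_\alpha(\beta) = t_\beta^{-1}(\alpha)$ as isotopy classes. Taking geometric intersection with $\alpha$ on each side, the left equals $i(\beta,\alpha)$ (since Dehn twists preserve intersection with their core), while the right equals $i(\alpha, t_\beta(\alpha))$. The classical estimate
\[ |n|\, i(a,b)\, i(b,c) - i(a,c) \;\le\; i(t_b^n(a), c) \;\le\; |n|\, i(a,b)\, i(b,c) + i(a,c), \]
applied with $n = 1$ and $a = c = \alpha$, $b = \beta$, evaluates this right side to precisely $i(\alpha,\beta)^2$. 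Hence $i(\alpha,\beta) = i(\alpha,\beta)^2$; since $\alpha \ne \beta$ rules out the value $0$, we conclude $i(\alpha,\beta) = 1$. The only point requiring any care is this intersection computation, but it is entirely classical; the rest is algebraic bookkeeping with the change-of-coordinates formula.
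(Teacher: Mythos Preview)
Your argument is correct and is essentially the standard proof from Farb--Margalit \cite[Section 3.5.1]{FM}, which is precisely what the paper cites in lieu of a proof.  The paper itself does not give an argument for this lemma, so there is no alternative approach to compare against.

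One small point of exposition: the phrase ``since $\alpha \neq \beta$ rules out the value $0$'' is a bit compressed.  Distinct curves can of course have intersection number zero; what you mean is that the already-derived curve identity $t_\alpha(\beta) = t_\beta^{-1}(\alpha)$, specialized to the disjoint case, forces $\beta = \alpha$ (equivalently: if $i(\alpha,\beta)=0$ then $t_\alpha$ and $t_\beta$ commute, and the braid relation collapses to $t_\alpha = t_\beta$).  This is implicit in what you wrote, but worth making explicit.
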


We are now ready to prove Theorem \ref{thm:injections}: 

\begin{proof}[Proof of Theorem \ref{thm:injections}]
Let $\phi: \Map(S) \to \Map(S')$ be a continuous injective twist-preserving homomorphism. In particular, $\phi$  induces a simplicial map \[\phi_*: \vC(S) \to \vC(S')\] by the rule \[\phi_*(\alpha) = \beta \iff \phi(t_\alpha) = t_\beta.\] Moreover,  $\phi_*$ is {\em superinjective}, meaning that $i(\alpha,\beta) = 0$ if and only if $i(\phi_*(\alpha), \phi_*(\beta)) \ne 0$. Finally, Lemma \ref{lem:braid} implies that $i(\phi_*(\alpha), \phi_*(\beta))= 1$ if and only if $i(\alpha,\beta)= 1$.

Now, let $Z_1 \subset Z_2 \subset \ldots$ be an exhaustion of $S$ by connected, properly embedded, $\pi_1$--injective, finite type subsurfaces for which the inclusion of $Z_i$ into $S$ induces an injection $\PMap(Z_i) \to \PMap(S)$. Without loss of generality, we may assume each component of $\partial Z_i$ is contained in the interior of $Z_{i+1}$ or is a component of $\partial S$.  Note that since each $Z_i$ is properly embedded and $\PMap(Z_i)$ injects into $\PMap(S)$, any puncture of $S$ is a puncture of $Z_i$ for some $i$.
Since the genus of $S$ is positive and $S$ has infinite type, without loss of generality we may assume that the same is true for the genus of $Z_i$ for all $i$, and that $Z_i$ is not a torus with one puncture/boundary component for any $i$.

For each $i$, choose a spanning chain $C_i$ of $Z_i$. By the discussion above,  $\phi_*(C_i)$ is a set of curves on $S'$ of the same cardinality as $C_i$, and whose elements have the same intersection pattern as the curves in $C_i$. Let $Z_i'$ be the surface obtained from the regular neighborhood of $\phi_*(C_i)$ by adding any complementary once-punctured disks or peripheral annuli which a boundary component of the neighborhood may bound.  By construction $Z_i'$ is filled by $\phi_*(C_i)$, and thus $Z_i$ and $Z_i'$ have the same genus.  Moreover, $\PMap(Z_i') < \PMap(S')$, and Proposition \ref{prop:humphreys} implies that  the homomorphism $\phi$ restricts to an injective homomorphism  \[\phi_i: \PMap(Z_i) \to \PMap(Z_i').\]   Since the genera of $Z_i$ and $Z_i'$ are equal and the homomorphism $\phi_i$ is injective, \cite[Theorem 1.1]{AS} implies that $\phi_i$ is induced by a (unique) proper $\pi_1$-injective embedding \[h_i:  Z_i \to Z_i',\] which is in fact a homeomorphism since $Z_i$ and $Z_i'$ are each filled by a spanning chain of the same cardinality and combinatorial type. 

\begin{rmk}
Although the main result of \cite{AS} is stated for surfaces of genus at least four (see the remark below the statement of Theorem 1.1 in \cite{AS}), it is also true for twist-preserving injective homomorphisms between pure mapping class groups of surfaces of the same positive genus. The reader can check that (after a suitable reduction of the target surface), the {\em standing assumption} of \cite[Section 10]{AS} holds, and that every argument goes through without modification from then on. 

We also note that the main result of \cite{AS} deals with arbitrary non-trivial homomorphisms between pure mapping class groups, and under suitable genus bounds, any such homomorphism is induced by an {\em embedding} between the underlying surfaces. In the case of an injective homomorphism, the reader will quickly verify that the definition of {\em embedding} of \cite{AS} simply means ``proper topological embedding''. 
\end{rmk}

Fix a complete hyperbolic metric with geodesic boundary on $S'$.
We view $h_i$ as a proper embedding from $Z_i$ to $S'$, and note that $h_i$ and $\phi_*$ agree on every curve in $Z_i$.   
Since $Z_i$ is more complicated than a torus with one boundary/puncture, $h_i$ is uniquely determined, up to isotopy, by this agreement with $\phi_*$ on curves.  As the punctures of $Z_i'$ were punctures of $S'$ (by construction), it follows that $h_i$ maps punctures of $Z_i$ to punctures of $S'$.
Now $Z_i \subset Z_{i+1}$, and it follows from the uniqueness statement that $h_i$ and $h_{i+1}$ agree on $Z_i$, up to isotopy.  Starting with $h_1$, and adjusting $h_2$ by isotopy if necessary, we may assume that $h_1$ and $h_2$ agree on $Z_1$.  Continuing inductively and adjusting $h_{i+1}$ to agree with $h_i$ on $Z_i$, we get a well-defined injective continuous function
\[ h \colon S \to S', \]
that agrees with $h_i$ on $Z_i$ for all $i$.  Without loss of generality, by arranging it to be the case at every step, we may assume that $h(\partial Z_i) = h_i(\partial Z_i)$ is a union of closed geodesics in the fixed hyperbolic metric.
Since any curve in $S$ is contained in $Z_i$ for some $i$, it follows that $h(\delta) = \phi_*(\delta)$ for every curve $\delta$ on $S$. 

We now claim that $h$ is proper. To see this, suppose for contradiction that this were not the case. Then there exists a compact set $K$ of $S'$ such that $h^{-1}(K)$ is not compact. We may enlarge $K$ if necessary to a finite type, connected, properly embedded subsurface of $W \subset S'$ with geodesic boundary (in our fixed hyperbolic metric).
Then $h^{-1}(W)$ is a non-empty, non-compact, closed subset of $S$.  Now we observe that $h^{-1}(W) \cap \partial Z_i \neq \emptyset$ for all $i$ sufficiently large: otherwise for some $i$, $W$ would be entirely contained in $h(Z_i) = h_i(Z_i)$, and hence so would $K$, implying that $h^{-1}(K) = h_i^{-1}(K)$ is compact (by properness of $h_i$), a contradiction.
Therefore, we can find components $\alpha_i \subset \partial Z_i$ so that for all $i$ sufficiently large, $h(\alpha_i)$ transversely and essentially intersects $W$.
 In particular, the sequence $\{t_{\alpha_i}\}$ of mapping classes converges to the identity in $\Map(S)$. In contrast, the image sequence $h(\alpha_i)$ is a sequence of pairwise distinct curves, all of which intersect $W$. As a consequence, the sequence $\{t_{h(\alpha_i)}\}$ cannot converge to the identity in $\Map(S')$. This contradicts the fact that $\phi$ is continuous, as \[\phi(t_{\alpha_i}) = t_{\phi_*(\alpha_i)}= t_{h(\alpha_i)}.\]

Since $h$ is a proper embedding it induces $h_\sharp \colon \PMap(S) \to \PMap(S')$, an injective homomorphism.

By hypothesis, $S$ either has empty boundary or else has at most one end accumulated by genus. In the former case, it follows that $h$ is in fact a homeomorphism, and thus $h_\sharp$ and $\phi$ are equal by Theorem~\ref{T:Alexander}. In the latter case, we know that  $h$ and $\phi_*$ agree on every isotopy class of simple closed curve, and hence $h_\sharp$ and $\phi$ agree on every Dehn twist.  Since Dehn twists topologically generate $\PMap(S)$ \cite{PV} and $\phi$ is continuous, it follows that $\phi$ and $h_\sharp$ are equal, as required.
\end{proof}

\begin{rmk} (1) As mentioned in the introduction, Theorem \ref{thm:injections} no longer holds if $\partial S\ne \emptyset$ and $S$ has more than one end accumulated by genus. Indeed, suppose that $S$ has at least two ends accumulated by genus, in which case there exists a surjective homomorphism  $\rho: \PMap(S) \to \ZZ$  by \cite{APV}. Since $\partial S \ne \emptyset$, we may find a surface $S'$  for which there exists a $\pi_1$-injective embedding $\iota: S \to S'$ and such that  the mapping class group of $S' \setminus S$ is infinite.  

Consider the injective homomorphism $\phi: \PMap(S) \to\PMap(S')$ induced by this embedding, noting that its image is supported on $\PMap(\iota(S))$. Now choose an infinite-order element $g \in \PMap(S')$ supported on $S' \setminus \iota(S)$. Using the homomorphism $\rho$ we  construct a homomorphism $\rho_g: \PMap(S) \to \PMap(S')$ whose image is the cyclic group generated by $g$; in particular, the image of $\rho_g$ is supported on $\PMap(S' \setminus \iota(S))$

As the images of $\phi$ and $\rho_g$ commute, we may consider the homomorphism $(\phi, \rho_g): \PMap(S) \to \PMap(S')$,  whose image is contained in \[\PMap(\iota(S)) \times \PMap(S' \setminus \iota(S))< \PMap(S').\] Observe that this homomorphism is continuous, injective (since $\phi$ is) and twist-preserving (since the $\rho$-image of every Dehn twist   is trivial, by \cite{APV}). However, it  is not induced by any subsurface embedding $S \to S'$, as required.

\medskip 

\noindent (2) As may have become apparent in the proof of Theorem \ref{thm:injections}, the requirement that the homomorphism $\phi$ be injective may be relaxed in various ways. For instance, say that a twist-preserving homomorphism {\em preserves twist pairs} if two Dehn twists commute if and only if their images commute. With this definition, a minor modification of the  proof above yields: 

\medskip

\noindent {\em Let $S$ and $S'$ be  surfaces of infinite type, and assume $S$ has positive genus. Assume further that either the boundary of $S$ is empty, or else $S$ has at most one end accummulated by genus.   If $\phi: \PMap(S) \to \PMap(S')$ a continuous twist-pair preserving homomorphism, then there is a proper $\pi_1$-injective embedding $h: S \to S'$ that induces $\phi$. }

\medskip

Indeed, the only subtlety when mimicking the proof above occurs in justifying why the restriction homomorphism $\phi_i:\PMap(Z_i) \to \PMap(Z_i')$ is injective. To this end, if a non-central element $f\in \PMap(Z_i)$ is in the kernel of $\phi_i$, we may use its Nielsen-Thurston normal form to find a curve $\alpha \subset Z_i$ such that $i(f(\alpha),\alpha) >0$. However, since $f \in \ker(\phi_i)$ we have that $i(\phi_*(f(\alpha)), \phi_*(\alpha))= 0$, which contradicts that $\phi_*$ is superinjective. 

If on the other hand, there is $f\in \ker(\phi_i)$ which is central, then we have an induced injective homomorphism between the pure mapping class groups modulo (the relevant part of) their centers, at which point the main result of \cite{AS} applies. 
\end{rmk}

\section{Superinjective maps of curve graphs}\label{sec:graphs}
Finally, in this section we give a proof of Theorem \ref{thm:superinjective}. We separate the proof into two parts.

\begin{proof}[Proof of part (1) of Theorem \ref{thm:superinjective}]
Equip $S$ with a fixed hyperbolic metric, and realize every vertex of $\vC(S)$ by its unique geodesic representative in its homotopy class. 

Suppose first that $S$ has infinite genus. By choosing two points $p,q$ in the complement of the union of all simple closed geodesics on $S$, we obtain a (non-surjective) superinjective map $\vC(S) \to \vC(S\smallsetminus \{p,q\})$. Now, observe that $\vC(S\smallsetminus \{p,q\}) \cong \vC(S')$, where $S'$ is obtained by removing two open discs (with disjoint closures) from $S$. Finally, we may glue a cylinder to $S'$ along its boundary components, obtaining a surface $S'' \cong S$ and the inclusion $S' \to S''$ induces a superinjective map $\vC(S') \to \vC(S'') \cong \vC(S)$.  The composition of these superinjective maps/isomorphisms
\[ \vC(S) \to \vC(S \smallsetminus \{p,q\}) \cong \vC(S') \to \vC(S'') \cong \vC(S) \]
is a superinjective map which is not surjective (since the first map is non-surjective).

Suppose now that $S$ has finite genus.  Then either $S$ has infinitely many punctures or its space of ends is a Cantor set union a finite set.  In the former case we may puncture $S$ at a point $p$ in the complement of the union of simple closed geodesics, producing a non-surjective, superinjective map
\[ \vC(S) \to \vC(S \smallsetminus \{p\}) \cong \vC(S), \]
where the isomorphism comes from fact that $S$ and $S \smallsetminus \{p\}$ are homeomorphic.  In the latter case, we may again remove a point $p$ missing all simple closed geodesics to get a non-surjective, superinjective map $\vC(S) \to \vC(S \smallsetminus \{p\}) \cong \vC(S')$, where $S'$ is obtained from $S$ by removing an open disk, then glue in a disk minus a Cantor set to $S'$ produce a surface $S'' \cong S$.  This gives a superinjective map $\vC(S') \to \vC(S'') \cong \vC(S)$, and composing with the maps above gives the required non-surjective, superinjective map in this case.
\end{proof}

We are now going to prove part (2) of Theorem \ref{thm:superinjective}. The proof is divided into two lemmas. In what follows, we denote  by $M$ the Loch Ness Monster surface, which we again recall is the unique, up to homeomorphism, infinite-genus surface with exactly one end. 

\begin{lem}
Let $S$ be an arbitrary infinite-type surface. Then there exists a superinjective map $\vC(S) \to \vC(M)$. 
\label{lem:intomonster}
\end{lem}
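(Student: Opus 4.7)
The plan is to construct an essential, $\pi_1$-injective, proper embedding $\iota \colon S \hookrightarrow M$ and take the induced map on curve graphs. Given such an embedding, the map $\iota_*([\alpha]) = [\iota(\alpha)]$ is well-defined (essential curves stay essential), simplicial (disjoint curves stay disjoint), and injective on vertices (non-isotopic curves stay non-isotopic). A standard fact about essential $\pi_1$-injective subsurface inclusions is that geometric intersection numbers of curves are preserved, so $\iota_*$ is automatically superinjective.

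To construct $\iota$, I would fix a compact exhaustion $K_1 \subset K_2 \subset \cdots$ of $S$ by connected incompressibly embedded subsurfaces, meaning every component of $\partial K_i$ is an essential simple closed curve of $S$ and no two of these boundary components are isotopic in $S$. Such exhaustions exist for any infinite-type surface.

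I would then build compatible essential embeddings $f_i \colon K_i \hookrightarrow M$ by induction on $i$, with $f_{i+1}|_{K_i} = f_i$. The inductive step amounts to embedding the compact cobordism $K_{i+1} \sm \inte(K_i)$ into the complement $M \sm \inte(f_i(K_i))$, extending $f_i$ on $\partial K_i$. Since $M$ has infinite genus and $f_i(K_i)$ is compact, this complement still has infinite genus, so there is always enough room; moreover, one can use spare handles to ensure that each new boundary component of $f_{i+1}(K_{i+1})$ is essential in $M$ and not isotopic to any previously chosen boundary. The limit $\iota = \bigcup_i f_i$ is then the desired embedding: properness comes from the compact exhaustion, while $\pi_1$-injectivity and essentiality of the frontier follow from the inductive choices.

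The main obstacle is this inductive construction, particularly when $S$ has zero or finite genus, e.g.\ $S \cong \RR^2 \sm C$ for a Cantor set $C$; in such cases every essential curve of $S$ is separating and carries no genus of its own, so one must carefully place each $K_i$ inside $M$ to guarantee that no boundary component of $K_i$ bounds a disk in $M$, and that distinct boundary components remain non-isotopic in $M$. The infinite genus of $M$ provides sufficient room at every finite stage, but the bookkeeping needed to maintain essentiality and non-isotopy throughout the induction is the technical heart of the argument.
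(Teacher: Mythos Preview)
There is a genuine gap: the proper embedding you propose does not exist in general. If $S$ has empty boundary (which the lemma allows---take $S = \RR^2 \smallsetminus C$), then any proper embedding $\iota \colon S \hookrightarrow M$ has image that is open (a boundaryless $2$--manifold embedded in a $2$--manifold) and closed (proper maps between locally compact Hausdorff spaces have closed image); connectedness of $M$ then forces $\iota(S) = M$, i.e.\ $\iota$ is a homeomorphism, which is absurd. So the limit of your $f_i$ is never proper when $S \not\cong M$ is boundaryless, and the assertion ``properness comes from the compact exhaustion'' is false. Concretely, the nested annuli around any planar end of $S$ are sent by your construction to essential annuli in $M$ that cannot escape to the single non-planar end of $M$; they must accumulate somewhere in the interior.

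The underlying idea can be salvaged, but only by abandoning the global map $\iota$ and arguing directly with the compact pieces: for any two curves $\alpha,\beta$ in $S$ choose $i$ with $\alpha,\beta \subset K_i$ and invoke the standard facts for the compact essential subsurface $f_i(K_i) \subset M$; the ``standard fact'' you cite does not apply to the non-proper limit $\iota$ as stated. You must also ensure in the induction that $M \smallsetminus f_i(K_i)$ stays connected, since otherwise a connected piece of $K_{i+1} \smallsetminus \inte(K_i)$ could have its inner boundary circles landing in different complementary components, obstructing the extension. The paper's proof avoids all of this via a different route: it passes through a chain $S \to S_1 \to S_2 \to S_3 \cong M$, first capping each puncture of $S$ with a one-holed torus, then puncturing once in every piece of a principal exhaustion and converting those punctures to boundary, and finally gluing in planar connectors; each step is an explicit modification with an evident superinjective map on curve graphs, and $S_3 \cong M$ is checked directly from the classification of surfaces.
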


\begin{proof}
We construct the desired map via a series of intermediate steps. First, let $S_1$ be the surface obtained from $S$ by first replacing each puncture by a boundary component, and then gluing, to each of these new boundary components, a torus with one boundary component. Observe that if $S_1$ has any planar ends, then they must belong to a Cantor set.  We have a superinjective map 
\begin{equation}
\vC(S) \to \vC(S_1).
\label{eq:1}
\end{equation}
Fix a principal exhaustion $P_1 \subset P_2 \subset \dots$ of $S_1$, namely an exhaustion of $S_1$ by compact subsurfaces such that every component of $\partial P_i$ is separating in $S_1$.  Let $X^n_1, X^n_2, \dots, X^n_{k_n}$ be the connected components of $P_n \sm P_{n-1}$. We puncture $S_1$ along $y^n_i \in X^n_i$, and then replace each of these punctures by a boundary component $b^n_i$, obtaining a new surface $S_2$ and a superinjective map
\begin{equation}
\vC(S_1) \to \vC(S_2)
\label{eq:2}
\end{equation}
The surface $S_2$ is naturally equipped with a principal exhaustion $Q_1 \subset Q_2 \subset \dots$ coming from the obvious subsurface embedding $S_2 \to S_1$; abusing notation, we  denote the connected components of $P_n \sm P_{n-1}$ in $S_2$ by $Z^n_1, Z^n_2, \dots, Z^n_{k_n}$.

 Now, we glue a sphere with $k_n$ boundary components to the union of the $Z_i^n$, thus obtaining a connected surface $Y_n$  which contains $Q_n$, see Figure \ref{Monster}. Moreover, $Y_n \subset Y_{n+1}$, so we may consider 
$S_3 = \bigcup Y_n$.
Since $S_2 = \bigcup Q_n$, we have that $S_3$ contains a subsurface homeomorphic to $S_2$, and in particular there is a superinjective map 
\begin{equation}
\vC(S_2) \to \vC(S_3)
\label{eq:3}
\end{equation}
It remains to show that $S_3$ is homeomorphic to $M$. By construction, $S_3$ has infinite genus and no planar ends.  Since the complement of any finite-type subsurface of $S_3$ contains only one component of infinite-type, we have that $S$ has a single end which is not planar. Therefore, $S_3$ is homeomorphic to $M$, as desired. At this point, the composition of the maps in \eqref{eq:1},\eqref{eq:2} and \eqref{eq:3} gives the superinjective map \[\vC(S) \to \vC(M).\] This finishes the proof of the lemma. 
\end{proof}

\begin{figure}[t]
\centering
\includegraphics[scale=0.3]{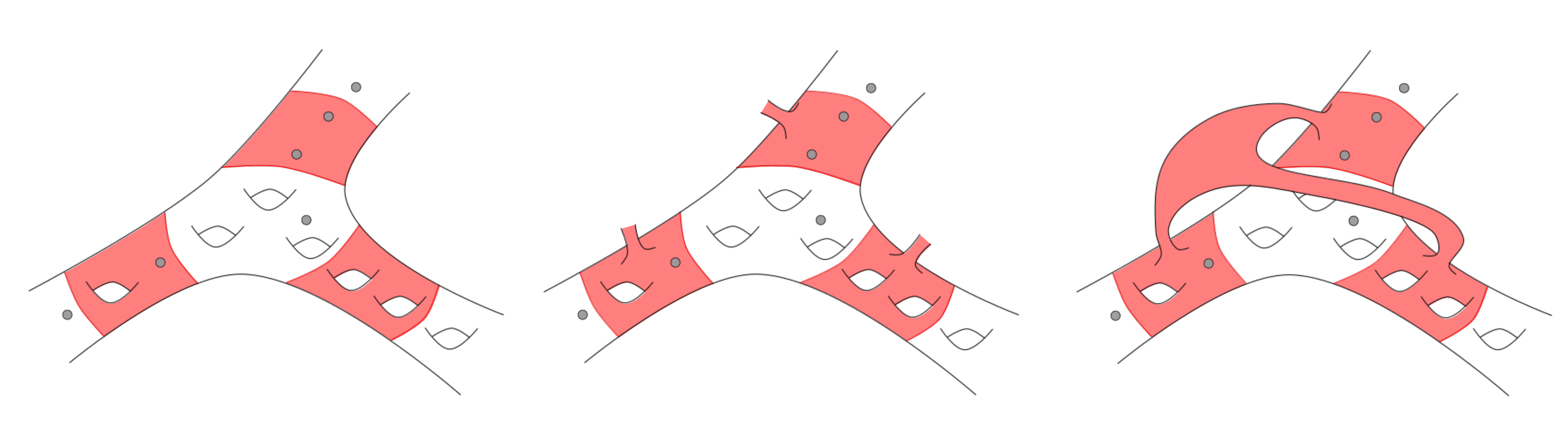}
\caption{One of the steps towards obtaining the surface $S_3$}
\label{Monster}
\end{figure}

Next, we prove that every surface $S'$ of infinite genus contains a subsurface homeomorphic to $M$.  

\begin{lem}\label{lem:monster}
If $S$ is a surface with infinite genus then it contains a subsurface $S \cong M$. In particular, there is a superinjective map $\vC(M) \to \vC(S)$.
\end{lem}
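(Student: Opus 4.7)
The plan is to construct a $\pi_1$-injective open subsurface $N \subset S$ homeomorphic to the Loch Ness Monster $M$, and then to show that the inclusion induces the desired superinjective map on curve graphs. First I would build $N$ by inductively producing a nested sequence $L_1 \subset L_2 \subset \cdots$ of compact connected subsurfaces of $S$, where each $L_n$ is a genus-$n$ surface with a single boundary component, $L_n \subset \inte(L_{n+1})$, and $L_{n+1} \smallsetminus \inte(L_n)$ is a connected piece (a genus-one surface with two boundary components). For the base case, I would take a non-separating curve $a_1 \subset S$ together with a transverse curve $b_1$ meeting it once, and let $L_1$ be a regular neighborhood of $a_1 \cup b_1$. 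For the inductive step, since $S$ has infinite genus and $L_n$ has finite genus, some component $C$ of $S \smallsetminus L_n$ has infinite genus; inside $C$ I would pick a one-holed torus $T_{n+1}$, connect it to $\partial L_n$ by a properly embedded arc $\gamma_n \subset C$, and let $L_{n+1}$ be a regular neighborhood of $L_n \cup \gamma_n \cup T_{n+1}$. Each $L_n$ is $\pi_1$-injective because its boundary cannot be null-homotopic in $S$: the side of $S \smallsetminus \partial L_n$ opposite $L_n$ always contains infinite genus, so it cannot be a disk.

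Setting $N := \bigcup_n \inte(L_n)$, the next step is to verify that $N \cong M$. Being open in $S$, $N$ has no boundary; it has infinite genus because $L_n$ has genus $n$; and since each $N \smallsetminus L_n = \bigcup_{m>n} (L_m \smallsetminus \inte(L_n))$ is a nested union of connected pieces sharing a common boundary, $N$ has exactly one end, which must be accumulated by genus. By the classification of surfaces, $N$ is homeomorphic to $M$.

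The final step is to show that the inclusion $N \hookrightarrow S$ induces a superinjective simplicial map $\vC(M) = \vC(N) \to \vC(S)$. The main obstacle I anticipate is ruling out the possibility that an essential simple closed curve $c \subset N$ becomes peripheral in $S$ --- that is, bounds a once-punctured disk or an annulus to $\partial S$ --- even though $M$ has no punctures or boundary. I plan to handle this by a case analysis. If $c$ is non-separating in $N$, then a dual curve in $N$ (one meeting $c$ exactly once) persists as a dual curve in $S$ by $\pi_1$-injectivity of the inclusion, so $c$ is non-separating, hence essential, in $S$. If instead $c$ is separating in $N \cong M$, then (since $M$ has no punctures) both components of $N \smallsetminus c$ must carry positive genus, and they embed into the two components of $S \smallsetminus c$, which therefore also carry positive genus and cannot be a disk, a once-punctured disk, or an annulus to $\partial S$. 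Combined with the standard fact that a $\pi_1$-injective subsurface inclusion preserves the geometric intersection number of simple closed curves, this shows the inclusion-induced map is intersection-preserving, hence superinjective, as required.
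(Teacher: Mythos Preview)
Your argument is correct and follows the same underlying strategy as the paper: string together infinitely many disjoint one-holed tori via connecting arcs to produce a one-ended infinite-genus subsurface, which is then homeomorphic to $M$ by the classification of surfaces. The paper localizes near a single non-planar end and realizes the monster subsurface as one side of a single separating curve, whereas you build it as an increasing union $\bigcup_n \inte(L_n)$; you also spell out the $\pi_1$-injectivity and the check that essential curves in $N$ stay essential in $S$, which the paper leaves implicit.
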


\begin{proof}
Let $N$ be a neighborhood of a non-planar end $e$.  Let $c_1, c_2, \dots \subset N$ be a set of pairwise disjoint separating curves, each of which bound bounding a one-holed torus.  Let $a_i$ be an arc connecting $c_{i-1}$ and $c_i$ such that $a_i \cap a_j = \emptyset$.

Now, the boundary of a neighborhood of $\bigcup a_i \cup c_i$ contains a separating arc $\alpha$.  We define $\Sigma$ to be the component of $S \sm \alpha$ containing every $c_i$ (here, we take the complement $S \sm \alpha$ to be without boundary).  It follows that $\Sigma$ has infinite genus.  Moreover, the complement of any compact subsurface contains a single non-compact component, so $\Sigma$ has a single end and is therefore homeomorphic to $M$.
\end{proof}

Finally, we put together the above pieces in order to prove Theorem \ref{thm:superinjective}: 

\begin{proof}[Proof of Theorem \ref{thm:superinjective}]
Let $S$ and $S'$ be surfaces of infinite type, and assume $S'$ has infinite genus. By Lemma \ref{lem:intomonster}, there is a superinjective map \[\vC(S) \to \vC(M).\] In turn, Lemma \ref{lem:monster} tells us there exists a superinjective map \[\vC(M) \to \vC(S'),\] from which the result follows. 
\end{proof}

\end{document}